\newcommand{\sep}{,}
\newcommand{\titl}{Some New Bounds on the Entropy Numbers of Diagonal Operators}
\newcommand{\keyw}{Diagonal Operators\sep{} Entropy Numbers}
\setlist{itemsep=0pt}
\newtheoremstyle{famous} 
{} 
{} 
{\itshape}
{} 
{\bfseries\sffamily\boldmath} 
{} 
{\newline} 
{\boldmath\thmnumber{\begin{footnotesize}#2\end{footnotesize} }\thmnote{#3}}
\newtheoremstyle{kursiv}
{} 
{} 
{\itshape} 
{} 
{\bfseries\sffamily\boldmath} 
{} 
{ } 
{\boldmath\thmnumber{\begin{footnotesize}#2\end{footnotesize} }\thmname{#1 }\thmnote{(#3)}}
\newtheoremstyle{normal}
{} 
{} 
{\rmfamily} 
{} 
{\bfseries\sffamily\boldmath} 
{} 
{ } 
{\boldmath\thmnumber{\begin{footnotesize}#2\end{footnotesize} }\thmname{#1 }\thmnote{(#3)}}
\newtheoremstyle{oNum}
{} 
{} 
{\rmfamily} 
{} 
{\bfseries\sffamily\boldmath} 
{} 
{\newline} 
{\thmname{#1 }\thmnote{(#3)}}
\def\@endtheorem{\endtrivlist}
\theoremstyle{kursiv}
\newtheorem{thm}{Theorem}[section]
\newtheorem{lem}[thm]{Lemma}
\theoremstyle{normal}
\theoremstyle{famous}
\DeclareFontFamily{U}{matha}{\hyphenchar\font45}
\DeclareFontShape{U}{matha}{m}{n}{
      <5> <6> <7> <8> <9> <10> gen * matha
      <10.95> matha10 <12> <14.4> <17.28> <20.74> <24.88> matha12
      }{}
\DeclareSymbolFont{matha}{U}{matha}{m}{n}
\DeclareFontFamily{U}{mathx}{\hyphenchar\font45}
\DeclareFontShape{U}{mathx}{m}{n}{
      <5> <6> <7> <8> <9> <10>
      <10.95> <12> <14.4> <17.28> <20.74> <24.88>
      mathx10
      }{}
\DeclareSymbolFont{mathx}{U}{mathx}{m}{n}
\DeclareMathDelimiter{\vvvert}{0}{matha}{"7E}{mathx}{"17}
\newcounter{todono}
\renewcommand{\thetodono}{(\arabic{todono})}
\newcommand{\todo}[2]{\stepcounter{todono}
$^{\text{\thetodono}}$\marginline{
\begin{singlespace}
\footnotesize
\textbf{{\tiny\thetodono} #1}\par #2
\end{singlespace}}%
\addcontentsline{lotodo}{table}{#1}}
\newcounter{temp}
\renewcommand{\d}{{\mathrm d}}
\newcommand{\R}{{\mathbb R}}
\newcommand{\N}{{\mathbb N}}
\DeclareMathOperator{\id}{id}
\newcommand{\uball}[1]{B_{#1}} 
\newcommand{\eqspace}{\;}
\newcommand{\sfrac}[2]{{#1}/{#2}}
\newcommand{\entropy}[2]{\varepsilon_{#1}(#2)}
\newcommand{\covering}[3][]{
\def\tempsize{#1}
\ifx\tempsize\empty
	\mathcal{N}(#3,#2)
\else
	\mathcal{N}\csname #1l\endcsname{(}#3,#2\csname #1r\endcsname{)}
\fi
}
\newcommand{\packing}[2]{\mathcal{P}(#2,#1)}
\newcommand{\quasi}[1]{\kappa_{#1}}
\newcommand{\ALP}{\hyperref[it:alp]{ALP}}
\newcommand{\AMP}{\hyperref[it:amp]{AMP}}
\newcommand{\EXP}{\ref{eq:exp}}
\renewcommand{\todo}[2]{} 
\title{\titl}
\author{Simon Fischer}
\date{\normalsize\today}
\begin{document}


\maketitle

\begin{abstract}
Entropy numbers are an important tool for quantifying the compactness of operators. Besides establishing new upper bounds on the entropy numbers of diagonal operators $D_\sigma$ from $\ell_p$ to $\ell_q$, where $p\not=q$, we investigate the optimality of these bounds. In the case of $p<q$ optimality is proven for fast decaying diagonal sequences, which include exponentially decreasing sequences. In the case of $p>q$ we show optimality under weaker assumption than previously used in the literature. In addition, we illustrate the benefit of our results with examples not covered in the literature so far.
\end{abstract}

\paragraph{Keywords} \keyw




\section{Introduction and Main Results}


Entropy numbers and covering numbers are important standard tools for quantifying the compactness of operators with various applications in different fields of mathematics, e.g.\ 
functional analysis (see e.g.\ \cite{K1986,CaSt1990,EdTr1996} for operator ideals and eigenvalue distribution of compact operators), 
approximation theory (see e.g.\ \cite{Tr1978,EdTr1996,Tr2006} for embeddings of Sobolev or Besov spaces), 
probability theory (see e.g.\ \cite{KuLi1993,LiLi1999} for small deviations of Gaussian processes and \cite{VaWe1996} for empirical process theory), and
statistical learning theory (see e.g.\ \cite{ScSm2001,GyKoKrWa2002,CuZh2007,StCh2008} for capacity of hypothesis spaces).
In many of these applications discretization techniques are used to reduce the often difficult problem of estimating entropy numbers in function spaces to easier estimation problems in sequence spaces. For instance, the problem of quantifying the compactness of Sobolev embeddings can be reduced to diagonal operators in sequence spaces via wavelet or Fourier bases, see e.g.\ \cite{K2008,CoK2009} and references therein. 
%
%
In this article, we therefore derive new entropy number bounds for diagonal operators.

To be more precise, let $0< p,q\leq \infty$ and $\sigma=(\sigma_k)_{k\geq 1}$ be a non-negative and non-increasing sequence of real numbers. We write $D_\sigma:\ell_p\to\ell_q$ for the diagonal operator between the usual sequence spaces $\ell_p$ and $\ell_q$, i.e.\ $D_\sigma(x_k)_{k\geq 1} \coloneqq (\sigma_kx_k)_{k\geq 1}$. If we denote the closed unit ball of $\ell_p$ by $\uball{\ell_p}$ then the entropy numbers of the operator $D_\sigma:\ell_p\to\ell_q$ are defined by
\[ 
\entropy{n}{D_\sigma} \coloneqq \inf\Bigl\{\varepsilon>0:\ \exists y_1,\ldots,y_n\in \ell_q\text{ with }D_\sigma\uball{\ell_p}\subseteq\bigcup_{i=1}^ny_i + \varepsilon\uball{\ell_q}\Bigr\}
\]
for all $n\geq 1$. In case of $p=q$ the asymptotic behavior of the entropy numbers $\entropy{n}{D_\sigma}$ is well-known for \emph{all} diagonal sequences $\sigma$, see e.g.\ \citet[Proposition~1.7]{GoKSc1987} for the Banach space case $1\leq p \leq \infty$ but, modulo the constant, the result remains valid for all $0<p\leq\infty$. In case of $p\not=q$---as far as we know---there are only partial answers, see e.g.\ \cite{K2005,K2008,CaRu2014}. The present work is a further contribution to this problem: 
Our first theorem fills a gap in the literature by providing an upper bound in case of $p<q$, which is optimal for sequences 
satisfying the condition \emph{exponential decay} (\EXP), see Theorem~\ref{thm:p_leq_q} for an exact definition. The second theorem considers the case $p>q$ and gives an upper bound, which is optimal for sequences satisfying the condition \emph{at least polynomial decay} (\ALP) as well as for sequences satisfying the condition \emph{at most polynomial decay} (\AMP), see Theorem~\ref{thm:p_geq_q} for an exact definition of (\ALP) and (\AMP).
For the  second type of sequences this recovers the optimal bound of \citet{K2008}, while the first type of sequences have not been considered so far. A more detailed comparison between our results and existing bounds can be found at the end of this section. The proofs of both our theorems combine the ideas of \citet[Proposition~1.7]{GoKSc1987} and \citet[Hilfsatz~2]{Ol1978}. Moreover, in the appendix we summarize relations between the regularity conditions on $\sigma$ we consider and some other common regularity conditions.

Before we proceed let us introduce some notation. For real sequences $(x_n)_{n\geq 1}$ and $(y_n)_{n\geq 1}$ we write $x_n\preccurlyeq y_n$ iff there is a constant $c>0$ with $x_n\leq c y_n$ for all $n\geq 1$ and $x_n\asymp y_n$ iff $x_n\preccurlyeq y_n$ as well as $x_n\succcurlyeq y_n$ hold. In the following, we declare an upper or lower bound $(x_n)_{n\geq 1}$ on the entropy numbers to be \emph{optimal} if there is a corresponding lower resp.\ upper bound $(y_n)_{n\geq 1}$ with $x_n\asymp y_n$.

\begin{thm}[Bound for $p<q$]\label{thm:p_leq_q}
Let $0 < p < q\leq \infty$ with $\sfrac{1}{p} = \sfrac{1}{q}+\sfrac{1}{s}$ and $\sigma=(\sigma_k)_{k\geq 1}$ be a sequence with $\sigma_k>0$ and $\sigma_k\searrow 0$. Then the entropy numbers of the diagonal operator $D_\sigma:\ell_p\to\ell_q$ satisfy
\begin{equation}\label{eq:upper_bound_p_leq_q}
\entropy{n}{D_\sigma} 
\preccurlyeq \sup_{k\geq 1}\, k^{-\sfrac{1}{s}} \biggl(\frac{(\sigma_1 + k^{\sfrac{1}{s}}\sigma_k)\cdot\ldots\cdot(\sigma_k + k^{\sfrac{1}{s}}\sigma_k)}{n} \biggr)^{\sfrac{1}{k}}\eqspace.
\end{equation}
If, in addition, there is a real number $b>1$ with 
\begin{equation}\label{eq:exp}\tag{EXP}
\sup_{k\leq n}\frac{\sigma_n b^n}{\sigma_k b^k} < \infty
\end{equation}
then the bound in \eqref{eq:upper_bound_p_leq_q} is optimal and coincides with
\[ 
\entropy{n}{D_\sigma} 
\asymp \sup_{k\geq 1}\,k^{-\sfrac{1}{s}}\biggl(\frac{\sigma_1\cdot\ldots\cdot\sigma_k}{n}\biggr)^{\sfrac{1}{k}}\eqspace.
\]
\end{thm}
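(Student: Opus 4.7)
The overall plan is to combine a truncation of $D_\sigma$ with a Brunn--Minkowski-based volumetric packing bound on the resulting finite-dimensional diagonal operator, in the spirit of Gordon--K\"onig--Sch\"utt; the key ingredient is the volume ratio $\mathrm{vol}(\uball{\ell_p^k})^{\sfrac{1}{k}}/\mathrm{vol}(\uball{\ell_q^k})^{\sfrac{1}{k}} \asymp k^{-\sfrac{1}{s}}$ that encodes the embedding $\ell_p\hookrightarrow\ell_q$.

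For the upper bound, fix $k\geq 1$ and split $D_\sigma = S_k + T_k$, where $S_k$ keeps only the first $k$ diagonal entries. Since $p<q$, one has $\|T_k\colon\ell_p\to\ell_q\|=\sigma_{k+1}\leq\sigma_k$ (attained on $e_{k+1}$). Zero-extending any $\varepsilon$-net of $S_k\uball{\ell_p^k}$ in $\ell_q^k$ gives an approximation of $D_\sigma\uball{\ell_p}$ in $\ell_q$ up to error $\preccurlyeq \varepsilon+\sigma_k$ (with a $q$-dependent constant when $q<1$), so that
\[\entropy{n}{D_\sigma}\;\preccurlyeq\;\entropy{n}{S_k\colon\ell_p^k\to\ell_q^k}+\sigma_k.\]
For the finite-dimensional factor I would apply the packing--volume inequality
\[\mathcal{N}\bigl(S_k\uball{\ell_p^k},\varepsilon\uball{\ell_q^k}\bigr)\;\leq\;\frac{\mathrm{vol}\bigl(S_k\uball{\ell_p^k}+\tfrac{\varepsilon}{2}\uball{\ell_q^k}\bigr)}{\mathrm{vol}(\tfrac{\varepsilon}{2}\uball{\ell_q^k})},\]
combine it with $\mathrm{vol}(S_k\uball{\ell_p^k})=(\sigma_1\cdots\sigma_k)\mathrm{vol}(\uball{\ell_p^k})$, Brunn--Minkowski, and the volume ratio above, and solve for $\varepsilon$ to obtain $\entropy{n}{S_k\colon\ell_p^k\to\ell_q^k}\preccurlyeq k^{-\sfrac{1}{s}}(\sigma_1\cdots\sigma_k/n)^{\sfrac{1}{k}}$ whenever $n\geq 2^k$.

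To recover the exact form of \eqref{eq:upper_bound_p_leq_q}, pass to $\tau_j\coloneqq\sigma_j+k^{\sfrac{1}{s}}\sigma_k$. The finite-dimensional bound is dominated by $k^{-\sfrac{1}{s}}(\prod_{j\leq k}\tau_j/n)^{\sfrac{1}{k}}$ trivially (as $\sigma_j\leq\tau_j$), while $\prod_{j\leq k}\tau_j\geq(k^{\sfrac{1}{s}}\sigma_k)^k$ forces $k^{-\sfrac{1}{s}}(\prod_j\tau_j/n)^{\sfrac{1}{k}}\geq\sigma_k/n^{\sfrac{1}{k}}$, so the truncation tail $\sigma_k$ sits inside the same expression up to a factor $n^{\sfrac{1}{k}}$. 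Choosing $k=k(n)\asymp\log_2 n$ simultaneously guarantees $n\geq 2^k$ (so the volume bound applies) and $n^{\sfrac{1}{k}}\leq C$ (so $\sigma_k$ is absorbed), giving $\entropy{n}{D_\sigma}\preccurlyeq k^{-\sfrac{1}{s}}(\prod_{j\leq k}\tau_j/n)^{\sfrac{1}{k}}$ and hence the supremum in \eqref{eq:upper_bound_p_leq_q}; the trivial bound $\entropy{1}{D_\sigma}\leq\sigma_1$ handles the residual small-$n$ range.

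For the optimality statement under \eqref{eq:exp}, I first establish the matching lower bound: coordinate projection $\ell_q\to\ell_q^k$ is norm-decreasing, so $\entropy{n}{D_\sigma}\geq\entropy{n}{S_k\colon\ell_p^k\to\ell_q^k}$, and the crude covering count $n\,\varepsilon^k\,\mathrm{vol}(\uball{\ell_q^k})\geq(\sigma_1\cdots\sigma_k)\mathrm{vol}(\uball{\ell_p^k})$ gives $\entropy{n}{D_\sigma}\succcurlyeq k^{-\sfrac{1}{s}}(\sigma_1\cdots\sigma_k/n)^{\sfrac{1}{k}}$ after taking the sup over $k$. Under \eqref{eq:exp}, $\sigma_k/\sigma_j\leq c\,b^{j-k}$ for all $j\leq k$, and a geometric summation yields
\[\sum_{j=1}^{k}\log\bigl(1+k^{\sfrac{1}{s}}\sigma_k/\sigma_j\bigr)\;\leq\;\sum_{m=0}^{k-1}\log\bigl(1+c\,k^{\sfrac{1}{s}}b^{-m}\bigr)\;=\;O\bigl((\log k)^2\bigr),\]
so that $(\prod_j\tau_j)^{\sfrac{1}{k}}\asymp(\prod_j\sigma_j)^{\sfrac{1}{k}}$ uniformly in $k\geq 1$; the upper and lower bounds thus coincide, proving optimality. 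The main obstacle is reconciling the two opposing constraints on $k$ in the upper bound---the Brunn--Minkowski estimate needs $n\geq 2^k$ while the tail $\sigma_k$ is absorbed only for $n\leq C^k$---and it is exactly the additive correction $k^{\sfrac{1}{s}}\sigma_k$ inside each product factor that lets the choice $k\asymp\log_2 n$ thread both, with the uniform $O((\log k)^2/k)$ bound above then erasing the correction under \eqref{eq:exp}.
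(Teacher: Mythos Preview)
The decisive step in your upper-bound argument---bounding $\mathrm{vol}\bigl(S_k\uball{\ell_p^k}+\tfrac{\varepsilon}{2}\uball{\ell_q^k}\bigr)$ from above via Brunn--Minkowski---does not work: Brunn--Minkowski is the inequality $\mathrm{vol}(A+B)^{1/k}\geq\mathrm{vol}(A)^{1/k}+\mathrm{vol}(B)^{1/k}$, a \emph{lower} bound on the Minkowski-sum volume, whereas you need an \emph{upper} bound to control the covering number. There is no reverse inequality available here. The paper's replacement (its Lemma~2.2, after Oloff) is a coordinate-wise inclusion: first $\uball{\ell_q^k}\subseteq k^{1/s}\uball{\ell_p^k}$, then $D_\sigma\uball{\ell_p^k}+D_\omega\uball{\ell_p^k}\subseteq 2\quasi{p}\,D_{\sigma+\omega}\uball{\ell_p^k}$, which yields directly the product $\prod_{i\leq k}\bigl(\sigma_i+c\,\varepsilon k^{1/s}\bigr)$. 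The additive correction in each factor of~\eqref{eq:upper_bound_p_leq_q} is thus not a device you bolt on afterward to absorb the tail---it is the natural output of the volume estimate. Relatedly, your clean intermediate target $\entropy{n}{S_k}\preccurlyeq k^{-1/s}(\sigma_1\cdots\sigma_k/n)^{1/k}$ for $n\geq 2^k$ is false with a uniform constant: for $\sigma_i=2^{-(i-1)^2}$ and $n=2^k$, projection onto the first coordinate forces $\entropy{2^k}{S_k}\geq 2^{-k}$, while the right-hand side is of order $2^{-ck^2}$.

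Once the volume step is done correctly, the paper also organizes the truncation differently. Rather than fixing $k\asymp\log_2 n$ and threading the competing constraints $n\geq 2^k$ and $n^{1/k}\leq C$, it chooses $k$ adapted to $\varepsilon$ via $\sigma_{k+1}\leq\varepsilon/2<\sigma_k$; this kills the tail automatically and lets the relation $k^{1/s}<2\sigma_k k^{1/s}/\varepsilon$ make the covering bound homogeneous in $\varepsilon$, after which a separate inversion lemma gives~\eqref{eq:upper_bound_p_leq_q} with the supremum over all $k$. Your optimality argument under~\eqref{eq:exp} via the $O((\log k)^2)$ log-sum is correct; the paper reaches the same conclusion more directly by AM--GM plus the $\ell_s^k$ triangle inequality, reducing to boundedness of $\sigma_k\bigl(\sum_{i\leq k}\sigma_i^{-s}\bigr)^{1/s}$, which it records as one of the equivalent forms of~\eqref{eq:exp}.
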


Note that the supremum in (\EXP) is taken over all tuples $(n,k)\in\N^2$ with $k\leq n$. Moreover, (\EXP) implies $\sigma_n \preccurlyeq b^{-n}$ and is independent of $p$ and $q$.

To treat the case $p>q$ we recall that the diagonal operator $D_\sigma$ is well-defined if and only if $\sigma\in\ell_r$ with $\sfrac{1}{q}=\sfrac{1}{p}+\sfrac{1}{r}$. For this reason we restricted our considerations in this case to $\sigma\in\ell_r$ and define the \emph{tail sequence} for $k\geq 1$
\begin{equation}\label{eq:tail_def}
\tau_k \coloneqq \Bigl(\sum_{n=k}^\infty \sigma_n^r\Bigr)^{\sfrac{1}{r}}\eqspace.
\end{equation}

\begin{thm}[Bound for $p>q$]\label{thm:p_geq_q}
Let $0< q<p \leq \infty$ with $\sfrac{1}{q} = \sfrac{1}{p} + \sfrac{1}{r}$ and $\sigma=(\sigma_k)_{k\geq 1}\in\ell_r$ be a sequence with $\sigma_k>0$ and $\sigma_k\searrow 0$. Then the entropy numbers of the diagonal operator $D_\sigma:\ell_p\to\ell_q$ satisfy
\begin{equation}\label{eq:upper_bound_p_geq_q}
\entropy{n}{D_\sigma} \preccurlyeq\sup_{k\geq 1} \biggl(\frac{(\tau_k + k^{\sfrac{1}{r}}\sigma_1)\cdot\ldots\cdot(\tau_k + k^{\sfrac{1}{r}}\sigma_k)}{n}\biggr)^{\sfrac{1}{k}}
\eqspace.
\end{equation}
Moreover, under each of the following additional assumptions the bound in \eqref{eq:upper_bound_p_geq_q} is optimal:
\begin{enumerate}
\item\label{it:alp} Assumption~(\ALP): $\tau_n\preccurlyeq \sigma_n n^{\sfrac{1}{r}}$. In this case the bound in \eqref{eq:upper_bound_p_geq_q} coincides with
\[ 
\entropy{n}{D_\sigma} 
\asymp \sup_{k\geq 1}\,k^{\sfrac{1}{r}}\Bigl(\frac{\sigma_1\cdot\ldots\cdot\sigma_k}{n}\Bigr)^{\sfrac{1}{k}}\eqspace.
\]
\item\label{it:amp} Assumption~(\AMP): $\tau_n\succcurlyeq \sigma_n n^{\sfrac{1}{r}}$. In this case the bound in \eqref{eq:upper_bound_p_geq_q} coincides with
\[ 
\entropy{n}{D_\sigma} 
\asymp \tau_{\lfloor\log_2(n)\rfloor +1}\eqspace.
\]
\end{enumerate}
\end{thm}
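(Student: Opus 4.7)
My plan is to follow the head-tail decomposition of Gordon-König-Schütt, adapted to $p\neq q$ via the $\ell_q^k$-ellipsoid trick of Oloff that is flagged in the introduction. For fixed $k\geq 1$ I split $D_\sigma = D_\sigma P_k + D_\sigma P_k^\perp$, where $P_k$ is the coordinate projection onto $\{1,\ldots,k\}$. Hölder's inequality with $\sfrac{1}{q}=\sfrac{1}{p}+\sfrac{1}{r}$ immediately gives $\|D_\sigma P_k^\perp\|_{\ell_p\to\ell_q}\leq \tau_{k+1}\leq \tau_k$, so the tail image of $B_{\ell_p}$ is covered by a single $\ell_q$-ball of radius $\tau_k$. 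For the head the Hölder inclusion $B_{\ell_p^k}\subseteq k^{\sfrac{1}{r}}B_{\ell_q^k}$ (which uses exactly $p\geq q$ and $\sfrac{1}{r}=\sfrac{1}{q}-\sfrac{1}{p}$) moves the problem into $\ell_q^k$: $D_\sigma B_{\ell_p^k}\subseteq k^{\sfrac{1}{r}}D_\sigma B_{\ell_q^k}$. A volume comparison in $\R^k$ based on $N(K,\rho B)\leq \mathrm{vol}(K+\rho B)/\mathrm{vol}(\rho B)$, combined with convexity (resp.\ $q$-convexity if $q<1$) to obtain $k^{\sfrac{1}{r}}D_\sigma B_{\ell_q^k}+\rho B_{\ell_q^k}\subseteq c_q\, D_{k^{\sfrac{1}{r}}\sigma+\rho}B_{\ell_q^k}$, then yields
\[
N\bigl(D_\sigma B_{\ell_p^k},\,\rho B_{\ell_q^k}\bigr)\preccurlyeq \prod_{i=1}^k \frac{k^{\sfrac{1}{r}}\sigma_i+\rho}{\rho}.
\]
Setting $\rho = \bigl(\prod_{i\leq k}(\tau_k+k^{\sfrac{1}{r}}\sigma_i)/n\bigr)^{\sfrac{1}{k}}$ and combining the head and tail nets by the (quasi-)triangle inequality gives a cover of $D_\sigma B_{\ell_p}$ by at most $n$ balls of radius $\preccurlyeq \rho+\tau_k$; the supremum in \eqref{eq:upper_bound_p_geq_q} absorbs the $\tau_k$-term (by picking a smaller $k$ whenever $\rho<\tau_k$), proving \eqref{eq:upper_bound_p_geq_q}.

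For part \ref{it:alp}, the hypothesis $\tau_k\preccurlyeq k^{\sfrac{1}{r}}\sigma_k$ together with monotonicity $\sigma_i\geq\sigma_k$ for $i\leq k$ gives $\tau_k+k^{\sfrac{1}{r}}\sigma_i\asymp k^{\sfrac{1}{r}}\sigma_i$, so \eqref{eq:upper_bound_p_geq_q} collapses to $\sup_k k^{\sfrac{1}{r}}(\sigma_1\cdots\sigma_k/n)^{\sfrac{1}{k}}$. For the matching lower bound I restrict to $D_\sigma^{(k)}:\ell_p^k\to\ell_q^k$ and apply Schütt's reverse volume bound
\[
\entropy{n}{D_\sigma}\geq \entropy{n}{D_\sigma^{(k)}}\succcurlyeq \biggl(\frac{\mathrm{vol}(D_\sigma B_{\ell_p^k})}{n\cdot \mathrm{vol}(B_{\ell_q^k})}\biggr)^{\sfrac{1}{k}}\asymp k^{\sfrac{1}{r}}\Bigl(\frac{\sigma_1\cdots\sigma_k}{n}\Bigr)^{\sfrac{1}{k}},
\]
using the Stirling-type asymptotic $\mathrm{vol}(B_{\ell_p^k})/\mathrm{vol}(B_{\ell_q^k})\asymp c^k k^{\sfrac{k}{r}}$, and then take the supremum over $k$.

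For part \ref{it:amp}, the assumption $\tau_k\succcurlyeq k^{\sfrac{1}{r}}\sigma_k$ (which gives $\sigma_i\preccurlyeq\tau_i/i^{\sfrac{1}{r}}$ for every $i$) allows a dyadic summation that yields $\prod_{i\leq k}(\tau_k+k^{\sfrac{1}{r}}\sigma_i)\preccurlyeq C^k\tau_k^k$ for $k$ of order $\log_2 n$. Choosing $k=\lfloor\log_2 n\rfloor+1$ (so that $n^{\sfrac{1}{k}}\asymp 1$) then turns \eqref{eq:upper_bound_p_geq_q} into the claimed $\preccurlyeq \tau_{\lfloor\log_2 n\rfloor+1}$. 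For the matching lower bound I use a sign-vector packing refined along dyadic blocks: for $k=\lfloor\log_2 n\rfloor+1$ the $2^k$ vectors $(\epsilon_i\sigma_i)_{i\leq k}$, $\epsilon_i\in\{\pm 1\}$, lie in $D_\sigma B_{\ell_p}$, and partitioning $\{1,\ldots,k\}$ into dyadic index blocks with separations controlled by the corresponding tail norms $\tau_{2^j}$ upgrades the pairwise $\ell_q$-distance to order $\tau_k$, which under (AMP) forces $\entropy{n}{D_\sigma}\succcurlyeq\tau_{\lfloor\log_2 n\rfloor+1}$.

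The main obstacle I anticipate is keeping constants under control in the quasi-Banach regime $0<q<1$, where convexity in the volume argument is replaced by $q$-convexity and the triangle inequality by its quasi-version, and---more substantively for part \ref{it:amp}---establishing that $\prod_{i\leq k}(\tau_k+k^{\sfrac{1}{r}}\sigma_i)\asymp \tau_k^k$ up to an absolute $C^k$, which requires a careful dyadic-block summation exploiting the (AMP) regularity $\sigma_i^r\preccurlyeq \tau_i^r/i$. The (ALP) lower bound, while conceptually standard, also needs the volume ratio of the $\ell_p^k$- and $\ell_q^k$-balls to be tracked precisely enough to recover the $k^{\sfrac{1}{r}}$ factor cleanly.
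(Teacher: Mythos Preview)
Your treatment of the general upper bound \eqref{eq:upper_bound_p_geq_q} and of case (\ALP) is essentially the paper's argument: the same head--tail split, the same H\"older inclusion $B_{\ell_p^k}\subseteq k^{1/r}B_{\ell_q^k}$, the same volume covering for the head, and the same volume lower bound (Lemma~\ref{lem:entropy:lower_bound} with \eqref{eq:entropy:vol_ratio}) for the matching lower estimate. The only organisational difference is that the paper first fixes $\varepsilon$ and then chooses $k$ with $\tau_{k+1}\le\varepsilon/2<\tau_k$ (so the tail is absorbed automatically and Lemma~\ref{lem:prep:covering_entropy} converts the covering bound into \eqref{eq:upper_bound_p_geq_q}), whereas you fix $k$ and then pick $\rho$; your ``the supremum absorbs the $\tau_k$-term'' is the hand-wavy counterpart of that step and can be made precise along the paper's lines.

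The substantive gaps are both in case (\AMP). For the upper bound, the product estimate $\prod_{i\le k}(\tau_k+k^{1/r}\sigma_i)\preccurlyeq C^k\tau_k^{\,k}$ does \emph{not} follow from the raw inequality $\sigma_i^r\preccurlyeq \tau_i^r/i$ alone: a dyadic summation with that input yields factors $\tau_i$, not $\tau_k$. The paper's route is to first deduce from (\AMP) that $\tau$ is doubling (Lemma~\ref{lem:seq:tail}\ref{it:seq:tail:amp}, which in turn relies on regular-variation theory), hence $\tau_i\preccurlyeq \tau_k(k/i)^\beta$ for some $\beta>0$ by Lemma~\ref{lem:seq:doubling}\ref{it:seq:doubling:alm_incr}; only then does the product collapse via Stirling to $C^k\tau_k^{\,k}$. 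Note also that ``choosing $k=\lfloor\log_2 n\rfloor+1$'' in \eqref{eq:upper_bound_p_geq_q} does not bound a supremum; the paper instead bounds the full $\sup_k \tau_k n^{-1/k}$ by a recursion using the doubling constant. For the lower bound, your sign-vector packing does not reach $\tau_k$: the vectors $(\epsilon_i\sigma_i)_{i\le k}$ are not even in $D_\sigma B_{\ell_p}$ unless $p=\infty$ (their preimage has $\ell_p$-norm $k^{1/p}$), and after normalising by $k^{-1/p}$ the minimal pairwise $\ell_q$-distance is $2\sigma_k k^{-1/p}=\sigma_k k^{1/r}k^{-1/q}$, which under (\AMP) is $\preccurlyeq \tau_k k^{-1/q}\ll \tau_k$. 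The ``dyadic-block refinement'' you allude to does not obviously close this gap. The paper does not attempt a self-contained argument here at all: it invokes K\"uhn \cite[Theorem~1]{K2008} (valid once $\tau_n\asymp\tau_{2n}$) for the two-sided asymptotic $\entropy{n}{D_\sigma}\asymp\tau_{\lfloor\log_2 n\rfloor+1}$, and only checks that \eqref{eq:upper_bound_p_geq_q} is consistent with it.
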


According to Part~\ref{it:seq:tail:alp} of Lemma~\ref{lem:seq:tail} the Condition~(\ALP) implies $\sigma_n\preccurlyeq n^{-\alpha}$ for some $\alpha>\sfrac{1}{r}$. Moreover, Part~\ref{it:seq:tail:amp} of Lemma~\ref{lem:seq:tail} says that the Condition~(\AMP) is equivalent to $\tau_n\asymp\tau_{2n}$ and according to Lemma~\ref{lem:seq:doubling} this implies $\tau_n\succcurlyeq n^{-\alpha}$ for some $\alpha>0$. Furthermore, from Part~\ref{it:seq:exp:tail} of Lemma~\ref{lem:seq:exp} we get (\EXP)$\subseteq$(\ALP) and (\EXP)$\cap$(\AMP)$=\emptyset$.

Let us now compare our results to the bounds previously obtained in the literature. 
Since essentially all previously established results on the entropy (or covering)  numbers of $D_\sigma$, see e.g.\ \cite{KoTi1961,Mi1961,Ma1974,Ol1978,Ca1981a,K2001a} and the references therein, are contained in \cite{K2005,K2008,CaRu2014}, we restrict our comparison to the latter three articles.

In case of $p<q$ the most general entropy bounds are derived by K\"uhn in \cite{K2005}. Namely, he obtained optimal bounds under each of the following set of assumptions:
\begin{enumerate}
\item\label{it:kue_i} polynomial: $\sup_{k\leq n}\frac{\sigma_n n^\alpha}{\sigma_k k^\alpha} < \infty$ for some $\alpha>0$ and $\sigma_n\asymp\sigma_{2n}$,
\item\label{it:kue_ii} fast logarithmic: $\sup_{k\leq n} \frac{\sigma_n}{\sigma_k} \bigl(\frac{1+\log n}{1+\log k}\bigr)^{\sfrac{1}{s}}<\infty$ and $\sigma_{n^2}\asymp\sigma_n$,
\item\label{it:kue_iii} slow logarithmic: $\inf_{k\leq n} \frac{\sigma_n}{\sigma_k} \bigl(\frac{1+\log n}{1+\log k}\bigr)^{\sfrac{1}{s}}>0$.
\end{enumerate}
Note that Scenario~\ref{it:kue_i} and \ref{it:kue_ii} both exclude sequences that decrease too slow as well as sequences that decrease too fast. In contrast, \ref{it:kue_iii} only excludes sequences that decrease too fast. In comparison, the optimal bounds we obtain in Theorem \ref{thm:p_leq_q} require sequences that decay at least exponentially in the sense of (\EXP). Since all of the Scenarios \ref{it:kue_i}--\ref{it:kue_iii} imply $\sigma_n\asymp\sigma_{2n}$, we easily see that they all exclude (\EXP), that is, (\EXP) is not covered by the results in \cite{K2005}. 

In case of $p>q$, \cite{K2005} also provides optimal bounds for sequences $\sigma$ satisfying
\[
\sup_{k\leq n}\frac{\sigma_n n^\alpha}{\sigma_k k^\alpha} < \infty
\]
for some $\alpha>\sfrac{1}{r}$ and $\sigma_n\asymp\sigma_{2n}$. According to Lemma~\ref{lem:seq:tail} the combination of both assumptions is equivalent to the combination of (\AMP) \emph{and} (\ALP), i.e.\ $\tau_n\asymp\sigma_n n^{\sfrac{1}{r}}$. In \cite{K2008}, K\"uhn generalizes the results of \cite{K2005} by establishing optimal bounds under Assumption~(\AMP), only. Consequently, Theorem~\ref{thm:p_geq_q} recovers the upper bounds of \cite{K2008} and additionally provides optimal bounds for sequences $\sigma$ that only satisfy (\ALP).

Table \ref{table:examples} lists three types of sequences $\sigma$ that are not covered by the literature, but for which we obtain optimal bounds. Compared to \cite{K2005,K2008}, another advantage of our results is that they actually provide bounds for \emph{all} $p\not=q$ and \emph{all} sequences $\sigma$. However, in some cases the question of optimality is not answered yet.

\newcommand{\notcontained}{no}
\newcommand{\contained}{yes}
\begin{table}[t]\label{table:examples} \centering
\resizebox{0.95\textwidth}{!}{
\begin{tabular}{cc|ccc}
$\sigma_n \asymp $ & $\tau_n \asymp $ & (\AMP)  & (\ALP) & (\EXP)\\
\hline
\hline
$\exp\bigl(-a\log^\lambda(n)\bigr)$ & 
$\sigma_n\, n^{\sfrac{1}{r}} \log^{\sfrac{(1-\lambda)}{r}}(n)$ & 
\notcontained &
\contained\ if $\lambda>1$  & 
\notcontained\\
\hline
$\exp\bigl(-an^\lambda\bigr)$ & 
$\sigma_n\, n^{\sfrac{(1-\lambda)_+}{r}}$ & 
\notcontained & 
\contained & 
\contained\ if $\lambda\geq 1$\\
\hline
$\exp\bigl(-ae^{\lambda n}\bigr)$ &
$\sigma_n$ &
\notcontained &
\contained &
\contained\\
\end{tabular}
} 
\caption{Three types of sequences for which our results provide optimal bounds and which are not covered by the existing literature. For all examples we assume $a>0$ and $\lambda > 0$. In addition, the conditions (\AMP) and (\ALP) are only considered in the case $p>q$, whereas (\EXP) is actually independent of $p$ and $q$. Note some subtleties of the first example: For $\lambda=1$ it reduces to a plain polynomial decay, which is already well understood. Moreover, for $\lambda<1$ the operator $D_\sigma$ is not even bounded in case of $p>q$. Finally, for $\lambda<1$ and $p<q$, K\"uhn \cite{K2005} leaves the behavior of $\entropy{n}{D_\sigma}$  as an open question, which our results cannot address, either.
}
\end{table}

There is another strand of research, see e.g.~\cite{Ca1981a, CaRu2014}, that describes the asymptotic behavior of the entropy numbers in terms of \emph{(generalized) Lorentz spaces}. The most general result in this direction is \cite[Corollary~1.2]{CaRu2014}:
\[ 
\sigma \in \ell_{t,v,\varphi} \quad\iff\quad
\entropy{2^{n-1}}{D_\sigma} \in \ell_{u,v,\varphi}\eqspace, 
\]
where $\ell_{u,v,\varphi}$ is a generalized Lorentz space with slowly varying function
$\varphi$, see \cite[Section~2]{CaRu2014} for a definition, and the parameters satisfy $1\leq p,q\leq \infty$, $0<t,v\leq\infty$, $\sfrac{1}{t} > (\sfrac{1}{q}-\sfrac{1}{p})_+$, and $\sfrac{1}{u} = \sfrac{1}{t} - (\sfrac{1}{q} - \sfrac{1}{p})$. Note that the implication ($\Leftarrow$) is contained in Lemma~\ref{lem:entropy:lower_bound} and ($\Rightarrow$) is contained in Theorem~\ref{thm:p_geq_q} if $p>q$ and $v=\infty$.

Finally, many results previously obtained in the literature are based on the operator ideal theory and a dyadic splitting of the diagonal operator, see e.g.\ \cite{Ca1981a,K2001a,CaRu2014}. This approach reduces the problem of bounding $\entropy{n}{D_\sigma}$ to the estimation of entropy numbers of embeddings between finite dimensional sequence spaces. In order to bound the entropy numbers of these finite dimensional embeddings advanced bounds with a good so-called preasymptotic behavior are needed. Such bounds can be found e.g.\ in \cite{Sc1984,EdTr1996,GuLi2000,K2001} and are often based on sophisticated combinatoric arguments and interpolation theory. In contrast, our results are based on a single splitting of the diagonal operator and a simple bound for finite dimensional diagonal operators. The latter bound has no good preasymptotic behavior but it is easily proven by a plain volume argument.

\subsection*{Acknowledgment} 
I am especially grateful to Ingo Steinwart and Thomas K\"uhn for carefully proofreading preliminary versions of this manuscript and pointing out some errors. 
Moreover, I am very appreciative to the anonymous referees for their constructive comments and suggestions for improvement.
Finally, I am thankful to the International Max Planck Research School for Intelligent Systems (IMPRS-IS) for its support.
%
%



\section{Proofs}


Before we prove the main theorems we summarize some preparatory results. Because we will reduce the investigation of diagonal operators to the case of diagonal operators on $\R^k$ we will include this case in the following. To this end, we consider sequences over an index set $I\subseteq\N$ and define, for $0 < p \leq \infty$, the sequence space $\ell_p(I) \coloneqq \{x=(x_i)_{i\in I}\in\R^I:\ \|x\|_{\ell_p(I)} < \infty\}$ with norm
\[ 
\|x\|_{\ell_p(I)} \coloneqq \biggl(\sum_{i\in I} |x_i|^p\biggr)^{\sfrac{1}{p}}
\]
and closed unit ball $\uball{\ell_p(I)}$. With this notation we have $\ell_p=\ell_p(\N)$ and for $k\geq 1$ we introduce the abbreviation $\ell_p^k \coloneqq \ell_p(\{1,\ldots,k\})$. It is well-known that
\[ 
\|x + y\|_{\ell_p(I)} \leq \quasi{p} \bigl(\|x\|_{\ell_p(I)} + \|y\|_{\ell_p(I)}\bigr)
\]
holds for all $x,y\in\ell_p(I)$ with $\quasi{p} \coloneqq \max\{1, 2^{\sfrac{1}{p} - 1}\}$. Consequently, $\ell_p(I)$ is a quasi-Banach space for all $0<p\leq\infty$ and $\ell_p(I)$ is a Banach space iff $1\leq p\leq \infty$.

In the following, we fix $0 < p,q \leq \infty$, a sequence $\sigma=(\sigma_i)_{i\in I}\in\R^I$, and the diagonal operator $D_\sigma:\ell_p(I)\to\ell_q(I)$ defined by $D_\sigma(x_i)_{i\in I} \coloneqq (\sigma_i x_i)_{i\in I}$.  
As a consequence of H\"older's inequality the operator norm of $D_\sigma$ satisfies
\begin{equation}\label{eq:prep:op_norm}
\|D_\sigma\| = \begin{cases}
\|\sigma\|_{\ell_r(I)}, & p> q,\ \sfrac{1}{q} = \sfrac{1}{p} + \sfrac{1}{r}\\
\|\sigma\|_{\ell_\infty(I)}, &p\leq q \eqspace.
\end{cases}
\end{equation}

Next, we introduce a concept related to entropy numbers. For $\varepsilon>0$ the \emph{covering number} of $D_\sigma$ is defined by
\[
\covering{\varepsilon}{D_\sigma} \coloneqq \min\Bigl\{n\geq 1:\, \exists y_1,\ldots,y_n\in \ell_q(I)\text{ with }D_\sigma\uball{\ell_p(I)}\subseteq\bigcup_{i=1}^n y_i + \varepsilon\uball{\ell_q(I)}\Bigr\}
\eqspace.
\]
The next result establishes a comparison between covering and entropy numbers.

\begin{lem}\label{lem:prep:covering_entropy}
Let $0< p,q\leq\infty$, $(a_k)_{k\geq 1}$ be a positive sequence and $D_\sigma:\ell_p\to\ell_q$ be a diagonal operator with $\|D_\sigma\| < \infty$. If the covering number estimate
\begin{equation}\label{eq:prep:covering_bound}
\covering{\varepsilon}{D_\sigma}\leq \sup_{k\geq 1} a_k\Bigl(\frac{1}{\varepsilon}\Bigr)^k
\end{equation}
holds for all $0<\varepsilon<\entropy{1}{D_\sigma}$, then for all $n\geq 1$ the $n$-th entropy number satisfies
\[ 
\entropy{n}{D_\sigma}\leq \sup_{k\geq 1} \Bigl(\frac{a_k}{n}\Bigr)^{\sfrac{1}{k}}\eqspace.
\]
\end{lem}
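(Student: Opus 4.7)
The plan is to exploit the well-known duality between entropy and covering numbers: $\entropy{n}{D_\sigma}\leq \varepsilon$ whenever $\covering{\varepsilon}{D_\sigma}\leq n$. So the task reduces to showing that for every $\varepsilon$ just above $\varepsilon^\star\coloneqq \sup_{k\geq 1}(a_k/n)^{\sfrac{1}{k}}$, the hypothesized covering bound produces at most $n$ balls.

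First I would set $\varepsilon^\star$ as above and distinguish two cases. If $\varepsilon^\star\geq \entropy{1}{D_\sigma}$, the statement is immediate from monotonicity of the entropy number sequence, since $\entropy{n}{D_\sigma}\leq \entropy{1}{D_\sigma}\leq \varepsilon^\star$. Otherwise we have $\varepsilon^\star<\entropy{1}{D_\sigma}$, and I pick an arbitrary $\varepsilon$ with $\varepsilon^\star<\varepsilon<\entropy{1}{D_\sigma}$.

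For such $\varepsilon$, the definition of $\varepsilon^\star$ as the supremum yields $(\varepsilon^\star)^k\geq a_k/n$ for every $k\geq 1$, and hence $a_k\varepsilon^{-k}\leq a_k(\varepsilon^\star)^{-k}\leq n$. Taking the supremum over $k$ gives $\sup_{k\geq 1} a_k(\sfrac{1}{\varepsilon})^k \leq n$, and the hypothesis \eqref{eq:prep:covering_bound} therefore yields $\covering{\varepsilon}{D_\sigma}\leq n$. By the definition of the covering number this means that $D_\sigma\uball{\ell_p}$ can be covered by at most $n$ translates of $\varepsilon\uball{\ell_q}$, so by the definition of the entropy number $\entropy{n}{D_\sigma}\leq \varepsilon$. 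Letting $\varepsilon\searrow \varepsilon^\star$ concludes the argument.

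There is no substantive obstacle: the proof is essentially a bookkeeping exercise converting a bound on $\covering{\varepsilon}{D_\sigma}$ of the form $\sup_k a_k\varepsilon^{-k}$ into a bound on $\entropy{n}{D_\sigma}$ by inverting the roles of $n$ and $\varepsilon$. The only mild subtlety is that the hypothesis only holds in the range $\varepsilon<\entropy{1}{D_\sigma}$, which is handled by the preliminary case split, and one should be careful that strict versus non-strict inequalities are treated correctly (picking $\varepsilon$ strictly between $\varepsilon^\star$ and $\entropy{1}{D_\sigma}$ and passing to the limit at the end).
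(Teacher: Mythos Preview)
Your proof is correct and follows essentially the same inversion idea as the paper: both arguments exploit that $\covering{\varepsilon}{D_\sigma}\leq n$ implies $\entropy{n}{D_\sigma}\leq\varepsilon$, with the only difference being that the paper argues by contrapositive (fix $\varepsilon<\entropy{n}{D_\sigma}$, deduce $n<\covering{\varepsilon}{D_\sigma}$, pick a near-optimal $k_\delta$ in the supremum, and let $\delta\to 0$), whereas you argue directly and thereby avoid the auxiliary $\delta$. Your explicit case split on $\varepsilon^\star\geq \entropy{1}{D_\sigma}$ is handled implicitly in the paper via $\varepsilon<\entropy{n}{D_\sigma}\leq\entropy{1}{D_\sigma}$.
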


Note that $\sfrac{\|D_\sigma\|}{\quasi{q}} \leq \entropy{1}{D_\sigma} \leq \|D_\sigma\|$ is satisfied, see e.g.\ \cite[p.~11]{CaSt1990} for the Banach space case and e.g.\ \cite[Lemma~1 on p.~7]{EdTr1996} for the general case. Consequently, in the Lemma~\ref{lem:prep:covering_entropy} it is sufficient to check \eqref{eq:prep:covering_bound} for all $0<\varepsilon<\|D_\sigma\|$.

\begin{proof}
Let $n\geq 1$ be a natural number. If $\entropy{n}{D_\sigma}=0$ holds then $D_\sigma = 0$ is the zero operator and there is nothing to prove. In the following we assume $\entropy{n}{D_\sigma}>0$ and choose $0<\varepsilon<\entropy{n}{D_\sigma}$. By the definition of entropy and covering numbers we have $n<\covering{\varepsilon}{D_\sigma}$. Moreover, by our assumption there is, for every $\delta>0$, a $k_\delta\geq 1$ with
\[
n\leq \covering{\varepsilon}{D_\sigma} \leq (1+\delta)\, a_{k_\delta}\Bigl(\frac{1}{\varepsilon}\Bigr)^{k_\delta}\eqspace.
\]
This implies
\[ 
\varepsilon \leq  \Bigl(\frac{(1+\delta)\,a_{k_\delta}}{n}\Bigr)^{\sfrac{1}{k_\delta}} \leq (1+\delta)\, \Bigl(\frac{a_{k_\delta}}{n}\Bigr)^{\sfrac{1}{k_\delta}} \leq (1+\delta)\sup_{k\geq 1}\Bigl(\frac{a_k}{n}\Bigr)^{\sfrac{1}{k}}\eqspace.
\]
Letting $\delta\searrow 0$ and $\varepsilon\nearrow\entropy{n}{D_\sigma}$ we get the assertion.
\end{proof}

In the following, $\lambda^k$ denotes the $k$-dimensional Lebesgue measure.

\begin{lem}\label{lem:prep:vol_arg}
Let $0< p,q \leq \infty$, $k\geq 1$ and $\sigma_1,\ldots,\sigma_k>0$. Then for all $\varepsilon
>0$ the diagonal operator $D_\sigma :\ell_p^k \to \ell_q^k$ satisfies
\begin{equation}\label{eq:prep:vol_arg}
\covering{2 \varepsilon}{D_\sigma} \leq
(2 \quasi{p})^k\, \frac{\lambda^k(\uball{\ell_p^k})}{\lambda^k(\uball{\ell_q^k})}\, \Bigl(\|\id_{q,p}^k\| + \quasi{q}\frac{\sigma_1}{\varepsilon}\Bigr)\cdot\ldots\cdot\Bigl(\|\id_{q,p}^k\| + \quasi{q}\frac{\sigma_k}{\varepsilon}\Bigr)\eqspace,
\end{equation}
where $\id_{q,p}^k:\ell_q^k\to\ell_p^k$ denotes the identity operator.
\end{lem}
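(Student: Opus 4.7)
The plan is to combine the standard packing-covering duality with a volume estimate that bridges the source $\ell_p^k$-ball and the target $\ell_q^k$-ball via the operator norm $\|\id_{q,p}^k\|$. First I would choose a maximal subset $\{y_1,\ldots,y_N\}\subseteq D_\sigma\uball{\ell_p^k}$ whose pairwise $\ell_q^k$-distances strictly exceed $2\varepsilon$. By maximality the closed balls $y_i+2\varepsilon\uball{\ell_q^k}$ cover $D_\sigma\uball{\ell_p^k}$, giving $\covering{2\varepsilon}{D_\sigma}\leq N$; and by the $\quasi{q}$-quasi-triangle inequality the shrunken balls $y_i+\sfrac{\varepsilon}{\quasi{q}}\uball{\ell_q^k}$ are pairwise disjoint, since a common point would force $\|y_i-y_j\|_{\ell_q^k}\leq\quasi{q}(\sfrac{\varepsilon}{\quasi{q}}+\sfrac{\varepsilon}{\quasi{q}})=2\varepsilon$.

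The geometric heart of the argument is the inclusion
\[
D_\sigma\uball{\ell_p^k}+\tfrac{\varepsilon}{\quasi{q}}\uball{\ell_q^k}\ \subseteq\ 2\quasi{p}\,D_\mu\uball{\ell_p^k},\qquad \mu_i\coloneqq\sigma_i+\tfrac{\varepsilon\,\|\id_{q,p}^k\|}{\quasi{q}}.
\]
Because $\mu_i\geq\sigma_i$, any $x\in D_\sigma\uball{\ell_p^k}$ satisfies $\|D_\mu^{-1}x\|_{\ell_p^k}\leq\|D_\sigma^{-1}x\|_{\ell_p^k}\leq 1$. For $y\in\sfrac{\varepsilon}{\quasi{q}}\uball{\ell_q^k}$, the defining property of $\|\id_{q,p}^k\|$ gives $\|y\|_{\ell_p^k}\leq\sfrac{\varepsilon\,\|\id_{q,p}^k\|}{\quasi{q}}$, and together with the coordinatewise bound $\mu_i\geq\sfrac{\varepsilon\,\|\id_{q,p}^k\|}{\quasi{q}}$ this yields $\|D_\mu^{-1}y\|_{\ell_p^k}\leq 1$. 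The $\quasi{p}$-quasi-triangle inequality in $\ell_p^k$ then produces $\|D_\mu^{-1}(x+y)\|_{\ell_p^k}\leq 2\quasi{p}$, which is exactly the claimed inclusion.

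With the inclusion in hand, the $N$ disjoint shrunken balls all sit inside $2\quasi{p}D_\mu\uball{\ell_p^k}$. Since Lebesgue measure scales by $\prod_i\mu_i$ under the diagonal map $D_\mu$, comparing volumes produces
\[
N\Bigl(\tfrac{\varepsilon}{\quasi{q}}\Bigr)^k\lambda^k(\uball{\ell_q^k})\ \leq\ (2\quasi{p})^k\Bigl(\prod_{i=1}^k\mu_i\Bigr)\lambda^k(\uball{\ell_p^k}),
\]
and the substitution $\mu_i\cdot\sfrac{\quasi{q}}{\varepsilon}=\|\id_{q,p}^k\|+\sfrac{\quasi{q}\sigma_i}{\varepsilon}$ recovers \eqref{eq:prep:vol_arg}. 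The main obstacle is the geometric inclusion in the second step: one has to pick the scalings $\mu_i$ simultaneously large enough to swallow both summands into a single dilated diagonal $\ell_p^k$-ball while paying only the multiplicative quasi-constant $2\quasi{p}$, yet small enough that $\prod_i\mu_i/(\sfrac{\varepsilon}{\quasi{q}})^k$ collapses to exactly $\prod_i(\|\id_{q,p}^k\|+\sfrac{\quasi{q}\sigma_i}{\varepsilon})$. The additive choice $\mu_i=\sigma_i+\sfrac{\varepsilon\,\|\id_{q,p}^k\|}{\quasi{q}}$ is precisely the scaling that makes both sides balance.
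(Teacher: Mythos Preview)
Your proof is correct and follows essentially the same route as the paper: a maximal $2\varepsilon$-separated set (the paper phrases this via packing numbers), disjointness of the shrunken $\sfrac{\varepsilon}{\quasi{q}}$-balls, the key inclusion $D_\sigma\uball{\ell_p^k}+\sfrac{\varepsilon}{\quasi{q}}\uball{\ell_q^k}\subseteq 2\quasi{p}D_\mu\uball{\ell_p^k}$ with $\mu_i=\sigma_i+\sfrac{\varepsilon}{\quasi{q}}\|\id_{q,p}^k\|$, and then the volume comparison. The only cosmetic difference is that the paper first isolates the auxiliary inclusion $D_\sigma\uball{\ell_p^k}+D_\omega\uball{\ell_p^k}\subseteq 2\quasi{p}D_{\sigma+\omega}\uball{\ell_p^k}$ for arbitrary $\omega$ and then specializes to $\omega_i=\sfrac{\varepsilon}{\quasi{q}}\|\id_{q,p}^k\|$, whereas you prove the specific instance directly; the underlying computation is identical.
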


In case of $p=q$ the bound in \eqref{eq:prep:vol_arg} originates from \citet[Hilfsatz~2]{Ol1978}. Furthermore, note that the proof of \citet[Theorem~XVI]{KoTi1961} contains the case $p=q=2$ and $\sigma_n=n^{-\alpha}$.

\begin{proof}
For this proof we use \emph{packing numbers}, which for $\varepsilon>0$ are defined by
\[ 
\packing{\varepsilon}{D_\sigma} \coloneqq \max\Bigl\{n\geq 1:\ \exists y_1,\ldots,y_n\in D_\sigma\uball{\ell_p^k}\text{ with }\|y_i-y_j\|_{\ell_q^k}>2\varepsilon\ \forall i\not=j\Bigr\}\eqspace.
\]
Recall that $\packing{2\quasi{q}\varepsilon}{D_\sigma} \leq \covering{2\varepsilon}{D_\sigma} \leq \packing{\varepsilon}{D_\sigma}$ holds for all $\varepsilon>0$, see e.g.\ \cite[Theorem~IV]{KoTi1961} for the Banach space case. Therefore it is enough to prove that $\packing{\varepsilon}{D_\sigma}$ is bounded by the right hand side of \eqref{eq:prep:vol_arg}. 

Now, for $\varepsilon>0$ and $n \coloneqq \packing{\varepsilon}{D_\sigma}$ we choose $x_1,\ldots, x_n\in D_\sigma\uball{\ell_p^k}$ with $\|x_i - x_j\|_{\ell_q^k}>2\varepsilon$ for all $i\not=j$. Then $x_i + \sfrac{\varepsilon}{\quasi{q}}\, \uball{\ell_q^k}$ are disjoint sets contained in $D_\sigma\uball{\ell_p^k} + \sfrac{\varepsilon}{\quasi{q}}\,\uball{\ell_q^k}$. Hence their volume satisfies
\begin{equation}\label{eq:prep:vol_arg:prep}
n(\sfrac{\varepsilon}{\quasi{q}})^k\lambda^k(\uball{\ell_q^k}) = \lambda^k\Bigl(\bigcup_{i=1}^n \bigl(x_i + \sfrac{\varepsilon}{\quasi{q}}\,\uball{\ell_q^k}\bigr)\Bigr) \leq \lambda^k(D_\sigma\uball{\ell_p^k} + \sfrac{\varepsilon}{\quasi{q}}\,\uball{\ell_q^k})\eqspace.
\end{equation}

Before we continue to estimate \eqref{eq:prep:vol_arg:prep} we prove the following auxiliary result: For a second operator $D_\omega:\ell_p^k\to\ell_q^k$ with $\omega_i>0$ for all $i=1,\ldots, k$ we have
\begin{equation}\label{eq:prep:vol_arg:aux}
D_\sigma\uball{\ell_p^k} + D_\omega\uball{\ell_p^k}\subseteq 2\quasi{p} D_{\sigma+\omega}\uball{\ell_p^k}\eqspace.
\end{equation}
Since $D_{\sigma+\omega}$ is invertible \eqref{eq:prep:vol_arg:aux} is equivalent to $D_{\sigma+\omega}^{-1}(D_\sigma\uball{\ell_p^k} + D_\omega\uball{\ell_p^k})\subseteq 2\quasi{p} \uball{\ell_p^k}$. Now, to show \eqref{eq:prep:vol_arg:aux} we fix $x,y\in\uball{\ell_p^k}$ and observe
\begin{align*}
\|D_{\sigma+\omega}^{-1}(D_\sigma x + D_\omega y)\|_{\ell_p^k}
&\leq \quasi{p}\|D_{\sigma+\omega}^{-1} D_\sigma x\|_{\ell_p^k} + \quasi{p}\|D_{\sigma+\omega}^{-1} D_\omega y\|_{\ell_p^k}\\
&\leq \quasi{p}\|D_{\sigma + \omega}^{-1} D_\sigma\| + \quasi{p}\|D_{\sigma + \omega}^{-1}D_\omega\|
\eqspace.
\end{align*}
Since $D_{\sigma + \omega}^{-1} D_\sigma$ is an operator from $\ell_p^k$ to $\ell_p^k$ the operator norm is given by $\|D_{\sigma + \omega}^{-1} D_\sigma\| = \max_{i=1,\ldots,k}\frac{\sigma_i}{\sigma_i + \omega_i} \leq 1$. Analogously we have $\|D_{\sigma + \omega}^{-1} D_\omega\| = \max_{i=1,\ldots,k}\frac{\omega_i}{\sigma_i + \omega_i} \leq 1$ and therefore \eqref{eq:prep:vol_arg:aux} is proven. 

By the definition of the operator norm we have $\uball{\ell_q^k}\subseteq\|\id_{q,p}^k\|\uball{\ell_p^k}$. Together with \eqref{eq:prep:vol_arg:aux} we get
\[
D_\sigma\uball{\ell_p^k} + \sfrac{\varepsilon}{\quasi{q}}\,\uball{\ell_q^k}
\subseteq D_\sigma\uball{\ell_p^k} + \sfrac{\varepsilon}{\quasi{q}}\,\|\id_{q,p}^k\|\uball{\ell_p^k} 
\subseteq 2\quasi{p} D_{\sigma + \sfrac{\varepsilon}{\quasi{q}}\,\|\id_{q,p}^k\|}\uball{\ell_p^k}\eqspace. 
\]
Continuing estimate~\eqref{eq:prep:vol_arg:prep} with this inclusion yields \eqref{eq:prep:vol_arg}.
\end{proof}

\subsection{Entropy Bounds}

\newcommand{\proj}[2]{P_{#1}^{#2}}
\newcommand{\emb}[2]{I_{#1}^{#2}}
\newcommand{\diagF}[3]{D_{#1,#2}^{#3}}
\newcommand{\diagS}[3]{\emb{q}{k}\diagF{p}{q}{k}\proj{p}{k}}

In this subsection we provide lower and upper bounds on the entropy numbers. To this end, we define, for $k\geq 1$, the auxiliary operators
\begin{align*}
\diagF{p}{q}{k}:\ell_p^k\to\ell_q^k,\ (x_n)_{n=1}^k &\mapsto (\sigma_1 x_1,\ldots, \sigma_k x_k)\eqspace,\\ 
\proj{p}{k}:\ell_p\to\ell_p^k,\ (x_n)_{n\geq 1} &\mapsto (x_1,\ldots,x_k)\eqspace,\\
\emb{p}{k}:\ell_p^k\to\ell_p,\ (x_n)_{n=1}^k &\mapsto (x_1,\ldots, x_k,0,0,\ldots)\eqspace.
\end{align*}
Note that these operators satisfy $\diagF{p}{q}{k} = \proj{q}{k}D_\sigma\emb{p}{k}$ and $\|\emb{p}{k}\|=\|\proj{p}{k}\| = 1$.

\begin{lem}[Lower Bound]\label{lem:entropy:lower_bound}
Let $0 < p,q \leq \infty$ and $\sigma=(\sigma_k)_{k\geq 1}$ with $\sigma_k>0$ and $\sigma_k\searrow 0$ such that the diagonal operator $D_\sigma:\ell_p \to \ell_q$ is bounded. Then for all $n\geq 1$ the $n$-th entropy number satisfies
\[
\entropy{n}{D_\sigma} \geq \sup_{k\geq 1}\biggl(\frac{\lambda^k(\uball{\ell_p^k})}{\lambda^k(\uball{\ell_q^k})} \,\frac{\sigma_1\cdot\ldots\cdot\sigma_k}{n}\biggr)^{\sfrac{1}{k}}\eqspace.
\]
\end{lem}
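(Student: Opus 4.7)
The plan is a standard volume argument reduced to finite dimensions via the auxiliary operators $\proj{p}{k}$, $\emb{p}{k}$ defined just above the lemma.

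\textbf{Step 1 (reduction to finite dimension).} For a fixed $k\geq 1$ I would first show that $\entropy{n}{\diagF{p}{q}{k}}\leq \entropy{n}{D_\sigma}$. This follows from the factorisation $\diagF{p}{q}{k}=\proj{q}{k}D_\sigma\emb{p}{k}$ together with $\|\proj{q}{k}\|=\|\emb{p}{k}\|=1$: given $\varepsilon>\entropy{n}{D_\sigma}$ and a cover $D_\sigma\uball{\ell_p}\subseteq\bigcup_{i=1}^n(y_i+\varepsilon\uball{\ell_q})$, the set $\diagF{p}{q}{k}\uball{\ell_p^k}\subseteq \proj{q}{k}D_\sigma\uball{\ell_p}$ is covered by $\proj{q}{k}y_i+\varepsilon\uball{\ell_q^k}$, because $\|\proj{q}{k}\|=1$.

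\textbf{Step 2 (volume inequality in $\R^k$).} Next, pick any $\varepsilon>\entropy{n}{\diagF{p}{q}{k}}$ and points $z_1,\dots,z_n\in\ell_q^k$ with $\diagF{p}{q}{k}\uball{\ell_p^k}\subseteq\bigcup_{i=1}^n(z_i+\varepsilon\uball{\ell_q^k})$. Taking the $k$-dimensional Lebesgue measure on both sides and using subadditivity as well as translation invariance gives
\[
\lambda^k\bigl(\diagF{p}{q}{k}\uball{\ell_p^k}\bigr)\leq n\,\varepsilon^k\,\lambda^k(\uball{\ell_q^k}).
\]
Since $\diagF{p}{q}{k}$ is the linear map with determinant $\sigma_1\cdots\sigma_k>0$, the left-hand side equals $\sigma_1\cdots\sigma_k\,\lambda^k(\uball{\ell_p^k})$. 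Both volumes are finite (the unit balls of $\ell_p^k$ and $\ell_q^k$ are bounded, hence of finite Lebesgue measure, also in the quasi-Banach regime $0<p,q<1$), so we may rearrange to
\[
\varepsilon\geq\biggl(\frac{\lambda^k(\uball{\ell_p^k})}{\lambda^k(\uball{\ell_q^k})}\cdot\frac{\sigma_1\cdots\sigma_k}{n}\biggr)^{\sfrac{1}{k}}.
\]

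\textbf{Step 3 (conclusion).} Letting $\varepsilon\searrow\entropy{n}{\diagF{p}{q}{k}}$ and combining with Step~1 yields
\[
\entropy{n}{D_\sigma}\geq\biggl(\frac{\lambda^k(\uball{\ell_p^k})}{\lambda^k(\uball{\ell_q^k})}\cdot\frac{\sigma_1\cdots\sigma_k}{n}\biggr)^{\sfrac{1}{k}}
\]
for every $k\geq 1$, so taking the supremum over $k$ gives the claim.

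I do not anticipate a genuine obstacle: the only delicate points are that the above argument must survive the quasi-Banach case $0<p,q<1$ and the case $q=\infty$. For the quasi-Banach case, the proof is purely volumetric and never invokes the triangle inequality, so it goes through unchanged once one observes that $\uball{\ell_p^k}$ is Borel and bounded. For $q=\infty$ one has $\lambda^k(\uball{\ell_\infty^k})=2^k$, which is again finite, so the rearrangement in Step~2 is legal.
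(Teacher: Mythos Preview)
Your proof is correct and follows essentially the same approach as the paper: reduce to the finite-dimensional operator $\diagF{p}{q}{k}$ via $\diagF{p}{q}{k}=\proj{q}{k}D_\sigma\emb{p}{k}$, then apply a volume comparison in $\R^k$. The only cosmetic difference is that the paper invokes the multiplicativity of entropy numbers to obtain $\entropy{n}{\diagF{p}{q}{k}}\leq\entropy{n}{D_\sigma}$, whereas you verify this directly by pushing a cover through $\proj{q}{k}$.
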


Note that this lower bound holds without any additional assumption on $\sigma$. Moreover, a combination of \cite[Equation~(1.17)]{Pi1989} with Stirling's formula yields
\begin{equation}\label{eq:entropy:vol_ratio}
\biggl(\frac{\lambda^k(\uball{\ell_p^k})}{\lambda^k(\uball{\ell_q^k})}\biggr)^{\sfrac{1}{k}} 
\asymp k^{\sfrac{1}{q} - \sfrac{1}{p}}\eqspace.
\end{equation}

\begin{proof}
By the multiplicativity of entropy numbers, see e.g.\ \cite[p.~11]{CaSt1990} for the Banach space case and e.g.\ \cite[Lemma~1 on p.~7]{EdTr1996} for the general case, we find $\entropy{n}{\diagF{p}{q}{k}} = \entropy{n}{\proj{q}{k}D_\sigma\emb{p}{k}} \leq \entropy{n}{D_\sigma}$, and hence it remains to give a lower bound for $\entropy{n}{\diagF{p}{q}{k}}$. To this end, choose for $\varepsilon>\entropy{n}{\diagF{p}{q}{k}}$ some $x_1,\ldots, x_n\in\R^k$ with $D_\sigma\uball{\ell_p^k}\subseteq\bigcup_{i=1}^n(x_i + \varepsilon \uball{\ell_q^k})$. Consequently, the volume of these sets satisfy
\[ 
\sigma_1\cdot\ldots\cdot\sigma_k\lambda^k(\uball{\ell_p^k}) = \lambda^k(D_\sigma\uball{\ell_p^k})\leq \sum_{i=1}^n \lambda^k(x_i + \varepsilon \uball{\ell_q^k}) = n\varepsilon^k\lambda^k(\uball{\ell_q^k})\eqspace,
\]
and hence we find
\[ 
\varepsilon \geq \biggl(\frac{\lambda^k(\uball{\ell_p^k})}{\lambda^k(\uball{\ell_q^k})} \,\frac{\sigma_1\cdot\ldots\cdot\sigma_k}{n}\biggr)^{\sfrac{1}{k}}\eqspace.
\]
Letting $\varepsilon\searrow\entropy{n}{\diagF{p}{q}{k}}$ and taking the supremum over $k\geq 1$ we get the claim.
\end{proof}

Since the upper bounds in \eqref{eq:upper_bound_p_leq_q} and \eqref{eq:upper_bound_p_geq_q} are based on the same decomposition we first introduce this decomposition. To this end, recall that the covering numbers have an additivity and multiplicativity property analogously to the entropy numbers, see e.g.\ \cite[p.~11]{CaSt1990} for the Banach space case and e.g.\ \cite[Lemma~1 on p.~7]{EdTr1996} for the general case. Using these properties yields
\begin{align*}
\covering{\quasi{q}\varepsilon}{D_\sigma}
&= \covering[Big]{\quasi{q}\varepsilon}{\diagS{p}{q}{k} + (D_\sigma-\diagS{p}{q}{k})}\\
&\leq \covering[Big]{\sfrac{\varepsilon}{2}}{\diagS{p}{q}{k}}\cdot \covering[Big]{\sfrac{\varepsilon}{2}}{D_\sigma-\diagS{p}{q}{k}}\\
&\leq \covering[Big]{\sfrac{\varepsilon}{2}}{\diagF{p}{q}{k}}\cdot \covering[Big]{\sfrac{\varepsilon}{2}}{D_\sigma-\diagS{p}{q}{k}}\eqspace.
\end{align*}
In the following, we will choose a suitable $k$ with $\|D_\sigma - \diagS{p}{q}{k}\| \leq \sfrac{\varepsilon}{2}$. Since in this case we have $\covering{\sfrac{\varepsilon}{2}}{D_\sigma - \diagS{p}{q}{k}} = 1$ the estimate above reduces to
\begin{equation}\label{eq:entropy:prep}
\covering{\quasi{q}\varepsilon}{D_\sigma} \leq \covering{\sfrac{\varepsilon}{2}}{\diagF{p}{q}{k}}\eqspace.
\end{equation}
Let us first treat the case $p<q$. 

\begin{lem}\label{lem:entropy:p_leq_q}
Let $0< p<q \leq \infty$ with $\sfrac{1}{p} = \sfrac{1}{q} + \sfrac{1}{s}$ and $\sigma=(\sigma_k)_{k\geq 1}$ with $\sigma_k>0$ and $\sigma_k\searrow 0$. Then for all $n\geq 1$ the diagonal operator $D_\sigma:\ell_p \to \ell_q$ satisfies
\[
\entropy{n}{D_\sigma} 
\leq 4\quasi{p}\quasi{q}\, \sup_{k\geq 1} \biggl(\frac{\lambda^k(\uball{\ell_p^k})}{\lambda^k(\uball{\ell_q^k})}\,\frac{(2\quasi{q}\sigma_1 + k^{\sfrac{1}{s}}\sigma_k)\cdot\ldots\cdot(2\quasi{q}\sigma_k + k^{\sfrac{1}{s}}\sigma_k)}{n} \biggr)^{\sfrac{1}{k}}\eqspace.
\]
\end{lem}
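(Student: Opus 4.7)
The plan is to reduce the infinite-dimensional covering problem for $D_\sigma$ to the finite-dimensional one treated in Lemma~\ref{lem:prep:vol_arg}, then convert the resulting covering bound into an entropy bound via Lemma~\ref{lem:prep:covering_entropy}. The setup \eqref{eq:entropy:prep} already provides the splitting, so the main job is to choose a good cut-off $k=k(\varepsilon)$ and then clean up the resulting expression into the normalized form $\sup_k a_k\,\varepsilon^{-k}$.

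First, fix $0<\varepsilon<\entropy{1}{D_\sigma}\leq\|D_\sigma\|=\sigma_1$ (the equality using \eqref{eq:prep:op_norm} with $p<q$). Because $\sigma_k\searrow 0$, there exists a smallest integer $k=k(\varepsilon)\geq 1$ with $\sigma_{k+1}\leq \varepsilon/2$; minimality forces $\sigma_k>\varepsilon/2$, which is the key algebraic lever later. For this $k$, the tail operator $D_\sigma-\diagS{p}{q}{k}$ is again a diagonal operator on the indices $>k$, and since $p<q$ formula \eqref{eq:prep:op_norm} gives its norm as $\sigma_{k+1}\leq\varepsilon/2$. Hence \eqref{eq:entropy:prep} applies and yields $\covering{\quasi{q}\varepsilon}{D_\sigma}\leq\covering{\varepsilon/2}{\diagF{p}{q}{k}}$.

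Next I apply Lemma~\ref{lem:prep:vol_arg} with parameter $\varepsilon/4$ (so that $2\cdot\varepsilon/4=\varepsilon/2$), using the standard fact $\|\id_{q,p}^k\|=k^{1/p-1/q}=k^{1/s}$ for $p<q$. This yields
\[
\covering{\varepsilon/2}{\diagF{p}{q}{k}}
\leq (2\quasi{p})^k\,\frac{\lambda^k(\uball{\ell_p^k})}{\lambda^k(\uball{\ell_q^k})}\,\prod_{i=1}^k\Bigl(k^{1/s}+\tfrac{4\quasi{q}\sigma_i}{\varepsilon}\Bigr).
\]
Now comes the step I expect to be the most fiddly: converting this bound into the form required by Lemma~\ref{lem:prep:covering_entropy}, namely $a_k/\varepsilon^k$. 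Using the minimality property $\sigma_k>\varepsilon/2$, I estimate $k^{1/s}<k^{1/s}\cdot 2\sigma_k/\varepsilon$, so that
\[
k^{1/s}+\tfrac{4\quasi{q}\sigma_i}{\varepsilon}<\tfrac{2(k^{1/s}\sigma_k+2\quasi{q}\sigma_i)}{\varepsilon}.
\]
Substituting $\varepsilon'=\quasi{q}\varepsilon$ on the left of \eqref{eq:entropy:prep} introduces an extra factor $\quasi{q}$ per coordinate, and collecting the constants gives
\[
\covering{\varepsilon'}{D_\sigma}
\leq \Bigl(\tfrac{4\quasi{p}\quasi{q}}{\varepsilon'}\Bigr)^k\,\frac{\lambda^k(\uball{\ell_p^k})}{\lambda^k(\uball{\ell_q^k})}\,\prod_{i=1}^k\bigl(2\quasi{q}\sigma_i+k^{1/s}\sigma_k\bigr).
\]

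Since the particular choice $k=k(\varepsilon')$ is dominated by the supremum over all $k\geq 1$, the bound
$\covering{\varepsilon'}{D_\sigma}\leq\sup_{k\geq 1} a_k(1/\varepsilon')^k$ holds for every $0<\varepsilon'<\entropy{1}{D_\sigma}$ with $a_k \coloneqq (4\quasi{p}\quasi{q})^k\,\frac{\lambda^k(\uball{\ell_p^k})}{\lambda^k(\uball{\ell_q^k})}\,\prod_{i=1}^k(2\quasi{q}\sigma_i+k^{1/s}\sigma_k)$. Lemma~\ref{lem:prep:covering_entropy} then immediately converts this into $\entropy{n}{D_\sigma}\leq\sup_{k\geq 1}(a_k/n)^{1/k}$; pulling the constant $4\quasi{p}\quasi{q}$ out of the $k$-th root yields exactly the stated bound. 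The non-routine part is purely bookkeeping: tracking the cascade of factors $2,4,\quasi{p},\quasi{q}$ through the normalization and verifying that the minimality of $k(\varepsilon)$ is what legitimately replaces the additive constant $k^{1/s}$ by the multiplicative term $k^{1/s}\sigma_k/\varepsilon$.
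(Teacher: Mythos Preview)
Your proof is correct and follows essentially the same approach as the paper: choose the cut-off $k$ via $\sigma_{k+1}\leq\varepsilon/2<\sigma_k$, apply \eqref{eq:entropy:prep} and Lemma~\ref{lem:prep:vol_arg}, use $\sigma_k>\varepsilon/2$ to absorb the additive $k^{1/s}$ into a multiplicative $k^{1/s}\sigma_k/\varepsilon$, and finish with Lemma~\ref{lem:prep:covering_entropy}. Your explicit substitution $\varepsilon'=\quasi{q}\varepsilon$ is exactly what the paper does implicitly in its last line; the only cosmetic difference is that the paper phrases the range as $0<\varepsilon/2<\sigma_1$ rather than $0<\varepsilon<\entropy{1}{D_\sigma}$, and your ``minimality forces $\sigma_k>\varepsilon/2$'' tacitly uses $\varepsilon<\sigma_1$ in the boundary case $k=1$.
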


\begin{proof}
For every $0<\sfrac{\varepsilon}{2}< \|D_\sigma\|=\sigma_1$, there is a $k\geq 1$ with $\sigma_{k+1}\leq\sfrac{\varepsilon}{2} < \sigma_k$. Equation~\eqref{eq:prep:op_norm} gives us $\|D_\sigma - \diagS{p}{q}{k}\| = \sigma_{k+1} \leq\sfrac{\varepsilon}{2}$. Using Equation~\eqref{eq:entropy:prep} with this $k$, Lemma~\ref{lem:prep:vol_arg}, and $\|\id_{q,p}^k\| = k^{\sfrac{1}{s}}$ we get
\begin{align*}
\covering{\quasi{q}\varepsilon}{D_\sigma} 
&\leq \covering{\sfrac{\varepsilon}{2}}{\diagF{p}{q}{k}}\\
&\leq (2\quasi{p})^k \frac{\lambda^k(\uball{\ell_p^k})}{\lambda^k(\uball{\ell_q^k})} \Bigl(k^{\sfrac{1}{s}} + \frac{4\quasi{q}\sigma_1}{\varepsilon}\Bigr)\cdot\ldots\cdot\Bigl(k^{\sfrac{1}{s}} + \frac{4\quasi{q}\sigma_k}{\varepsilon}\Bigr)
\eqspace.
\end{align*}
Using $k^{\sfrac{1}{s}}<\sfrac{2\sigma_k k^{\sfrac{1}{s}}}{\varepsilon}$ and taking the supremum over $k\geq 1$ gives
\[ 
\covering{\quasi{q}\varepsilon}{D_\sigma} 
\leq \sup_{k\geq 1} \biggl\{\frac{\lambda^k(\uball{\ell_p^k})}{\lambda^k(\uball{\ell_q^k})} \bigl(\sigma_k k^{\sfrac{1}{s}} + 2\quasi{q}\sigma_1\bigr)\cdot\ldots\cdot\bigl(\sigma_k k^{\sfrac{1}{s}} + 2\quasi{q}\sigma_k\bigr) \Bigl(\frac{4\quasi{p}}{\varepsilon}\Bigr)^k\biggr\}\eqspace.
\]
Finally, Lemma~\ref{lem:prep:covering_entropy} yields the assertion.
\end{proof}

\begin{lem}\label{lem:entropy:p_geq_q}
Let $0< q < p \leq \infty$ with $\sfrac{1}{q} = \sfrac{1}{p} + \sfrac{1}{r}$, $\sigma=(\sigma_k)_{k\geq 1}\in\ell_r$ with $\sigma_k>0$ and $\sigma_k\searrow 0$, and $\tau$ the tail sequence defined by \eqref{eq:tail_def}. Then for all $n\geq 1$ the diagonal operator $D_\sigma:\ell_p \to \ell_q$ satisfies
\[
\entropy{n}{D_\sigma}
\leq  4\quasi{p}\quasi{q}\,\sup_{k\geq 1} \biggl(\frac{(\tau_k + 2\quasi{p}k^{\sfrac{1}{r}}\sigma_1)\cdot\ldots\cdot(\tau_k + 2\quasi{p}k^{\sfrac{1}{r}}\sigma_k)}{n}\biggr)^{\sfrac{1}{k}}\eqspace.
\]
\end{lem}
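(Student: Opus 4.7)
The plan is to mirror the proof of Lemma \ref{lem:entropy:p_leq_q}, with two adaptations specific to the regime $p > q$: the splitting point of $D_\sigma$ is now dictated by the tail sequence (since by Equation \eqref{eq:prep:op_norm} the norm of the omitted tail is $\tau_{k+1}$, not $\sigma_{k+1}$), and, crucially, the finite-dimensional diagonal operator is factored through $\ell_p^k$ so that Lemma \ref{lem:prep:vol_arg} is applied in a situation where the volume ratio is trivial.

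First, given $0 < \varepsilon/2 < \|D_\sigma\| = \tau_1$, I pick the unique $k \geq 1$ with $\tau_{k+1} \leq \varepsilon/2 < \tau_k$, which is possible because the assumption $\sigma\in\ell_r$ forces $\tau_k$ to be strictly decreasing to zero. Equation \eqref{eq:prep:op_norm} then yields $\|D_\sigma - \diagS{p}{q}{k}\| = \tau_{k+1} \leq \varepsilon/2$, so Equation \eqref{eq:entropy:prep} reduces the problem to bounding $\covering{\varepsilon/2}{\diagF{p}{q}{k}}$. I next write $\diagF{p}{q}{k} = \id_{p,q}^k \circ \tilde D^k$, where $\tilde D^k : \ell_p^k \to \ell_p^k$ is the diagonal operator with the same sequence $(\sigma_1,\ldots,\sigma_k)$ and $\id_{p,q}^k : \ell_p^k \to \ell_q^k$ is the identity. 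Since $p > q$, the power-mean inequality yields $\|\id_{p,q}^k\| = k^{\sfrac{1}{r}}$, hence $\covering{k^{\sfrac{1}{r}}}{\id_{p,q}^k} = 1$, and the multiplicativity of covering numbers gives
\[
\covering{\varepsilon/2}{\diagF{p}{q}{k}} \leq \covering{\varepsilon/(2k^{\sfrac{1}{r}})}{\tilde D^k}.
\]

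Applying Lemma \ref{lem:prep:vol_arg} to $\tilde D^k: \ell_p^k \to \ell_p^k$ (for which both the volume ratio and the identity norm equal $1$) yields
\[
\covering{\varepsilon/(2k^{\sfrac{1}{r}})}{\tilde D^k} \leq (2\quasi{p})^k \prod_{i=1}^{k}\bigl(1 + 4\quasi{p} k^{\sfrac{1}{r}}\sigma_i/\varepsilon\bigr).
\]
Since $\varepsilon < 2\tau_k$, I replace each $1$ in the product by $2\tau_k/\varepsilon$ and pull out $\varepsilon^{-k}$, obtaining $(4\quasi{p})^k\varepsilon^{-k}\prod_{i=1}^k(\tau_k + 2\quasi{p}k^{\sfrac{1}{r}}\sigma_i)$. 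Taking the supremum over $k \geq 1$ and then feeding this (with the substitution $\eta = \quasi{q}\varepsilon$) into Lemma \ref{lem:prep:covering_entropy} delivers the claim with leading constant $4\quasi{p}\quasi{q}$. The main obstacle---and the reason a naive adaptation of the $p<q$ argument fails---is the volume ratio $\lambda^k(\uball{\ell_p^k})/\lambda^k(\uball{\ell_q^k}) \asymp k^{k/r}$: a direct application of Lemma \ref{lem:prep:vol_arg} to $\diagF{p}{q}{k}$ would contribute that factor and produce only the weaker bound $\prod(k^{\sfrac{1}{r}}\tau_k + k^{\sfrac{1}{r}}\sigma_i)$. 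The detour through $\ell_p^k$ reroutes the growing $k^{\sfrac{1}{r}}$ onto the $\sigma_i$ terms (exactly where the claim places it), leaving $\tau_k$ unscaled.
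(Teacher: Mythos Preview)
Your proof is correct and follows essentially the same route as the paper's: the same choice of splitting index via $\tau_{k+1}\le\varepsilon/2<\tau_k$, the same factorization $\diagF{p}{q}{k}=\id_{p,q}^k\circ\diagF{p}{p}{k}$ to sidestep the volume ratio, the same application of Lemma~\ref{lem:prep:vol_arg} in the $p=p$ case, and the same use of $1<2\tau_k/\varepsilon$ before invoking Lemma~\ref{lem:prep:covering_entropy}. Your added explanation of why the detour through $\ell_p^k$ is needed is a welcome clarification not spelled out in the paper.
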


\begin{proof}
For every $0 < \sfrac{\varepsilon}{2} < \|D_\sigma\| = \tau_1$, there is a $k\geq 1$ with $\tau_{k+1}\leq\sfrac{\varepsilon}{2}<\tau_k$. Equation~\eqref{eq:prep:op_norm} gives us $\|D_\sigma - \diagS{p}{q}{k}\| = \tau_{k+1} \leq\sfrac{\varepsilon}{2}$. Using Equation~\eqref{eq:entropy:prep} with this $k$, the decomposition $\diagF{p}{q}{k} = \id_{p,q}^k\circ\diagF{p}{p}{k}$, and $\|\id_{p,q}^k\| = k^{\sfrac{1}{r}}$ we get
\[ 
\covering{\quasi{q}\varepsilon}{D_\sigma} 
\leq \covering{k^{-\sfrac{1}{r}}\sfrac{\varepsilon}{2}}{\diagF{p}{p}{k}}\cdot\covering{k^{\sfrac{1}{r}}}{\id_{p,q}^k} 
= \covering{k^{-\sfrac{1}{r}}\sfrac{\varepsilon}{2}}{\diagF{p}{p}{k}}\eqspace.
\]
Using Lemma~\ref{lem:prep:vol_arg} and $1<\sfrac{2\tau_k}{\varepsilon}$ gives
\begin{align*}
\covering{\quasi{q}\varepsilon}{D_\sigma} 
&\leq (2\quasi{p})^k \Bigl(1+\frac{4\quasi{p}k^{\sfrac{1}{r}}\sigma_1}{\varepsilon}\Bigr)\cdot\ldots\cdot\Bigl(1+\frac{4\quasi{p}k^{\sfrac{1}{r}}\sigma_k}{\varepsilon}\Bigr)\\
&\leq  \bigl(\tau_k + 2\quasi{p}k^{\sfrac{1}{r}}\sigma_1\bigr)\cdot\ldots\cdot\bigl(\tau_k + 2\quasi{p}k^{\sfrac{1}{r}}\sigma_k\bigr)\Bigl(\frac{4\quasi{p}}{\varepsilon}\Bigr)^k\eqspace.
\end{align*}
Finally, taking the supremum over $k$ and using Lemma~\ref{lem:prep:covering_entropy} gives the assertion.
\end{proof}

\subsection{Optimality}

\begin{proof}[Proof of Theorem~\ref{thm:p_leq_q}]
The upper bound in \eqref{eq:upper_bound_p_leq_q} is a consequence of Lemma~\ref{lem:entropy:p_leq_q} and Equation~\eqref{eq:entropy:vol_ratio}. It remains to prove the optimality under (\EXP). To this end, we continue the estimate of the upper bound as follows
\[ 
\entropy{n}{D_\sigma} 
\preccurlyeq \sup_{k\geq 1}\,k^{-\sfrac{1}{s}}\biggl(\frac{\sigma_1\cdot\ldots\cdot\sigma_k}{n}\biggr)^{\sfrac{1}{k}} \biggl(\Bigl(1 + \frac{k^{\sfrac{1}{s}}\sigma_k}{\sigma_1}\Bigr)\ldots\Bigl(1 + \frac{k^{\sfrac{1}{s}}\sigma_k}{\sigma_k}\Bigr) \biggr)^{\sfrac{1}{k}}\eqspace.
\]
Applying that the geometric mean is bounded by the arithmetic mean as well as the triangle inequality in $\ell_s^k$ yields
\begin{align*}
\biggl(\Bigl(1 + \frac{k^{\sfrac{1}{s}}\sigma_k}{\sigma_1}\Bigr)\ldots\Bigl(1 + \frac{k^{\sfrac{1}{s}}\sigma_k}{\sigma_k}\Bigr) \biggr)^{\sfrac{1}{k}}
&\leq \biggl(\sfrac{1}{k}\sum_{i=1}^k \Bigl(1 + \frac{k^{\sfrac{1}{s}}\sigma_k}{\sigma_i}\Bigr)^s\biggr)^{\sfrac{1}{s}}\\
&\leq \quasi{s} + \quasi{s}\sigma_k\biggl(\sum_{i=1}^k \sigma_i^{-s}\biggr)^{\sfrac{1}{s}}\eqspace.
\end{align*}
According Part~\ref{it:seq:exp:partial} of Lemma~\ref{lem:seq:exp} the right hand side is bounded in $k$ and we get the claimed upper bound. If we combine Lemma~\ref{lem:entropy:lower_bound} with Equation~\eqref{eq:entropy:vol_ratio} we get the corresponding lower bound.
\end{proof}

\begin{proof}[Proof of Theorem~\ref{thm:p_geq_q}]
The upper bound in \eqref{eq:upper_bound_p_geq_q} directly follows from Lemma~\ref{lem:entropy:p_geq_q} and it thus remains to prove the optimality under (\ALP) and (\AMP).

(\ALP) The upper bound \eqref{eq:upper_bound_p_geq_q} can be transformed into
\begin{equation*}
\entropy{n}{D_\sigma} \preccurlyeq \sup_{k\geq 1}\,k^{\sfrac{1}{r}}\Bigl(\frac{\sigma_1\cdot\ldots\cdot\sigma_k}{n}\Bigr)^{\sfrac{1}{k}} \biggl(\Bigl(\frac{\tau_k}{k^{\sfrac{1}{r}}\sigma_1}+ 1 \Bigr)\cdot\ldots\cdot\Bigl(\frac{\tau_k}{k^{\sfrac{1}{r}}\sigma_k} + 1\Bigr)\biggr)^{\sfrac{1}{k}}\eqspace.
\end{equation*}
According to (\ALP) the last factor is bounded in $k$. This yields the claimed upper bound. The claimed lower bound is a consequence of Lemma~\ref{lem:entropy:lower_bound} and \eqref{eq:entropy:vol_ratio}.

(\AMP) Because of Part~\ref{it:seq:tail:amp} of Lemma~\ref{lem:seq:tail} we have $\tau_n\asymp \tau_{2n}$. Hence \citet[Theorem~1]{K2008} yields $\entropy{n}{D_\sigma}\asymp \tau_{\lfloor\log_2(n)\rfloor + 1}$ and it is enough to show that upper bound in \eqref{eq:upper_bound_p_geq_q} is asymptotically bounded by $\tau_{\lfloor\log_2(n)\rfloor + 1}$. According to (\AMP) and Part~\ref{it:seq:doubling:alm_incr} of Lemma~\ref{lem:seq:doubling} applied to $(\tau_n)_{n\geq 1}$ there are constants $c_1,c_2,\beta>0$ with $\sigma_i\leq c_1 \tau_i i^{-\sfrac{1}{r}}$ and $\tau_i \leq c_2 \tau_k k^\beta i^{-\beta}$ for all $k\geq i$. Together we get for $\alpha \coloneqq \sfrac{1}{r} + \beta$
\[ 
\tau_k + k^{\sfrac{1}{r}}\sigma_i 
\leq \tau_k + c_1c_2 \tau_k \frac{k^{\sfrac{1}{r}+\beta}}{i^{\sfrac{1}{r} + \beta}} 
\leq \tau_k \frac{k^{\alpha}}{i^{\alpha}} (1 +c_1c_2)
\]
and all $k\geq i$. Plugging this into the bound in \eqref{eq:upper_bound_p_geq_q} we get
\begin{equation*}
\entropy{n}{D_\sigma}
\preccurlyeq  \sup_{k\geq 1} \biggl(\frac{(\tau_k + k^{\sfrac{1}{r}}\sigma_1)\cdot\ldots\cdot(\tau_k + k^{\sfrac{1}{r}}\sigma_k)}{n}\biggr)^{\sfrac{1}{k}}
\preccurlyeq\sup_{k\geq 1} \frac{\tau_k}{n^{\sfrac{1}{k}}}  \frac{k^\alpha}{(k!)^{\sfrac{\alpha}{k}}}\eqspace.
\end{equation*}
From Stirling's formula we know $(k!)^{\sfrac{1}{k}}\asymp k$. Consequently, we have 
\begin{equation}\label{eq:opt:temp_i}
\entropy{n}{D_\sigma} \preccurlyeq \sup_{k\geq 1} \frac{\tau_k}{n^{\sfrac{1}{k}}}
\end{equation}
and it remains to show, that the right hand side behaves asymptotically like $\tau_{\lfloor\log_2(n)\rfloor + 1}$. To this end, let $c>0$ be the doubling constant of $\tau$, i.e.\ $\tau_{2n}\geq c\tau_n$ for all $n\geq 1$. Without loss of generality we can assume $c<1$ and define $\alpha \coloneqq \frac{\log(2)}{2\log(\sfrac{1}{c})}>0$. For $k\leq \alpha\log_2(n)$ we have 
\[ 
n^{\frac{1}{2k} - \frac{1}{k}} 
= n^{-\frac{1}{2k}} 
\leq \exp\Bigl( - \frac{\log(n)}{2\alpha\log_2(n)} \Bigr)
= c 
\leq \frac{\tau_{2k}}{\tau_k}
\]
and this implies
\begin{equation}\label{eq:opt:temp_ii}
\frac{\tau_k}{n^{\frac{1}{k}}}\leq\frac{\tau_{2k}}{n^{\frac{1}{2k}}}\eqspace.
\end{equation}
A recursive application of this inequality enables us to restrict our supremum to $k > \alpha\log_2(n)$. Moreover, for such $k$ we have
\begin{equation}\label{eq:opt:temp_iii} 
1
\geq n^{-\sfrac{1}{k}} 
= \exp\Bigl(- \frac{\log(n)}{k}\Bigr) 
\geq \exp\Bigl( -\frac{\log(n)}{\alpha\log_2(n)} \Bigr)
= 2^{-\sfrac{1}{\alpha}}\eqspace.
\end{equation}
If we combine \eqref{eq:opt:temp_i}, \eqref{eq:opt:temp_ii}, and \eqref{eq:opt:temp_iii}, then we get
\[ 
\entropy{n}{D_\sigma}
\preccurlyeq \sup_{k\geq 1}\frac{\tau_k}{n^{\sfrac{1}{k}}} 
= \sup_{k>\alpha\log_2(n)} \frac{\tau_k}{n^{\sfrac{1}{k}}} 
\asymp \sup_{k>\alpha\log_2(n)} \tau_k 
= \tau_{\lfloor\alpha\log_2(n)\rfloor + 1}\eqspace.
\]
Finally, an application of Part~\ref{it:seq:doubling:o_varying} of Lemma~\ref{lem:seq:doubling} yields the assertion.
\end{proof}

\appendix



\section{Conditions on Sequences}\label{sec:seq}


In this section we collect some characterizations of the conditions used on the diagonal sequence. Most of them are consequences of the general theory of $\mathcal{O}$-regular varying functions/sequences, but for convenience we include the proofs or give detailed references. These results enable us to compare our findings with \cite{K2005,K2008}. In the following, all supremums $\sup_{k\leq n}$ and infimums $\inf_{k\leq n}$ are taken over all tuples $(n,k)\in\N^2$ with $k\leq n$. 

\begin{lem}[(\EXP) Sequences]\label{lem:seq:exp}
Let $r,s>0$, $\sigma=(\sigma_k)_{k\geq 1}$ with $\sigma_k>0$ and $\sigma_k\searrow 0$, $\tau$ be the tail sequence given by \eqref{eq:tail_def}, and $v_n \coloneqq \bigl(\sum_{k=1}^n \sigma_k^{-s}\bigr)^{\sfrac{1}{s}}$ the partial sum sequence. Then the following statements are equivalent:
\begin{enumerate}
\item\label{it:seq:exp:alm_decr} There is a real number $b>1$ with $\sup_{k\leq n}\frac{\sigma_n b^n}{\sigma_k b^k}<\infty$.
\item\label{it:seq:exp:shifted} There is an $n_0\geq 1$ and a $0<a<1$ with $\sigma_{k+n_0} \leq a\, \sigma_k$ for all $k\geq 1$.
\item\label{it:seq:exp:partial} $\sigma_n\asymp\sfrac{1}{v_n}$.
\item\label{it:seq:exp:tail} $\sigma_n\asymp \tau_n$. 
\end{enumerate}
\end{lem}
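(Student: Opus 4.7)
The plan is to make (ii) the hub and prove the equivalences (ii) $\Leftrightarrow$ (i), (ii) $\Leftrightarrow$ (iii), and (ii) $\Leftrightarrow$ (iv), since (ii) is the cleanest pointwise geometric decay statement from which to iterate.

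The equivalence (i) $\Leftrightarrow$ (ii) and the directions (ii) $\Rightarrow$ (iii), (iv) are routine. From (i), $\sigma_n \leq Cb^{k-n}\sigma_k$, so any $n_0$ with $Cb^{-n_0} < 1$ gives (ii); conversely, iterating (ii) gives $\sigma_{k+jn_0} \leq a^j \sigma_k$, and writing $n-k = jn_0 + \ell$ with $0 \leq \ell < n_0$ and invoking monotonicity of $\sigma$ shows that $\sigma_n b^n/(\sigma_k b^k)$ is bounded for any $b$ with $ab^{n_0} \leq 1$. For (ii) $\Rightarrow$ (iii), (iv), iterating (ii) also yields $\sigma_j^{-1} \leq a^{\lfloor (n-j)/n_0\rfloor}\sigma_n^{-1}$ for $j \leq n$; summing over blocks of length $n_0$ collapses $v_n^s = \sum_{j\leq n}\sigma_j^{-s}$ and $\tau_n^r = \sum_{j\geq n}\sigma_j^r$ to geometric series (with ratios $a^s$ and $a^r$, respectively), giving $v_n \preccurlyeq 1/\sigma_n$ and $\tau_n \preccurlyeq \sigma_n$. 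The matching lower bounds $v_n \geq 1/\sigma_n$ and $\tau_n \geq \sigma_n$ come from a single term.

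The main content of the lemma lies in the converses (iii) $\Rightarrow$ (ii) and (iv) $\Rightarrow$ (ii), both driven by the same ``one term dominates the sum'' inequality powered by monotonicity of $\sigma$. For (iv): the block estimate $\tau_k^r \geq \sum_{j=k}^{k+n_0-1}\sigma_j^r \geq n_0 \sigma_{k+n_0}^r$ combined with the nontrivial half $\tau_k \leq C\sigma_k$ of the hypothesis produces $\sigma_{k+n_0} \leq Cn_0^{-1/r}\sigma_k$. For (iii): the block estimate $v_{k+n_0}^s \geq v_k^s + n_0 \sigma_k^{-s}$ together with the two-sided hypothesis $c_1/\sigma_n \leq v_n \leq c_2/\sigma_n$ yields $\sigma_{k+n_0} \leq c_2(c_1^s + n_0)^{-1/s}\sigma_k$. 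In each case, choosing $n_0$ large enough forces the prefactor strictly below $1$, which is exactly (ii). The only obstacle is this index bookkeeping and correctly selecting $n_0$ to cross the threshold; no deep machinery is involved.
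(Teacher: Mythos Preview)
Your proposal is correct and follows essentially the same approach as the paper. The only cosmetic difference is the choice of hub: you route everything through (ii), while the paper proves (i)$\Rightarrow$(iii), (i)$\Rightarrow$(iv), (iii)$\Rightarrow$(ii), (iv)$\Rightarrow$(ii), and (ii)$\Rightarrow$(i); since (i) and (ii) are interchangeable encodings of geometric decay, the actual estimates (summing a geometric series for the forward directions, and the block bound $\sum_{j=k}^{k+n_0}\sigma_j^{\pm\bullet}\ge (n_0+1)\sigma_{k+n_0}^{\pm\bullet}$ plus a large $n_0$ for the converses) are identical in both write-ups.
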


Note that Condition~\ref{it:seq:exp:alm_decr} and \ref{it:seq:exp:shifted} are independent of $r>0$ and $s>0$. Consequently, if $\sigma$ satisfies Condition~\ref{it:seq:exp:partial} or \ref{it:seq:exp:tail} for some $s>0$ resp.\ $r>0$ then $\sigma$ satisfies both conditions for all $r,s>0$.

\begin{proof}
\ref{it:seq:exp:alm_decr}$\Rightarrow$\ref{it:seq:exp:partial} For $c \coloneqq \sup_{k\leq n}\frac{\sigma_n b^n}{\sigma_k b^k}<\infty$ we get
\[ 
v_n^s\sigma_n^s
= \sum_{k=1}^n \Bigl(\frac{\sigma_n}{\sigma_k}\Bigr)^s 
\leq c^s \sum_{k=1}^n b^{-s(n-k)}
= c^s \sum_{k=0}^{n-1} b^{-sk}
\leq \frac{(bc)^s}{b^s-1}
\]
for all $n\geq 1$. Moreover, $v_n\sigma_n \geq 1$ always holds. By considering $(\sfrac{\tau_k}{\sigma_k})^r$ we can analogously prove \ref{it:seq:exp:alm_decr}$\Rightarrow$\ref{it:seq:exp:tail}.

\ref{it:seq:exp:partial}$\Rightarrow$\ref{it:seq:exp:shifted} Let $c>0$ be a constant with $v_n\sigma_n\leq c$ for all $n\geq 1$. Because of the monotonicity of $\sigma$ we get for $k,n_0\geq 1$
\[ 
c^s
\geq v_{k+n_0}^s\sigma_{k+n_0}^s
= \sum_{i=1}^{k+n_0} \Bigl(\frac{\sigma_{k+n_0}}{\sigma_i}\Bigr)^s 
\geq \sum_{i=k}^{k+n_0} \Bigl(\frac{\sigma_{k+n_0}}{\sigma_i}\Bigr)^s
\geq \Bigl(\frac{\sigma_{k+n_0}}{\sigma_k}\Bigr)^s (n_0+1)\eqspace.
\]
Choosing $n_0 \coloneqq \lceil c^s \rceil$ yields for $k\geq 1$ 
\[ 
\frac{\sigma_{k+n_0}}{\sigma_k} \leq \frac{c}{(n_0+1)^{\sfrac{1}{s}}} \leq \frac{c}{(c^s +1)^{\sfrac{1}{s}}} < 1\eqspace.
\]

\ref{it:seq:exp:tail}$\Rightarrow$\ref{it:seq:exp:shifted} Let $c>0$ be a constant with $\tau_k\leq c \sigma_k$ for all $k\geq 1$. Because of the monotonicity of $\sigma$ we get for $k,n_0\geq 1$
\[
c^r 
\geq \frac{\tau_k^r}{\sigma_k^r}
= \sum_{n=k}^\infty \Bigl(\frac{\sigma_n}{\sigma_k}\Bigr)^r 
\geq \sum_{n=k}^{k+n_0} \Bigl(\frac{\sigma_n}{\sigma_k}\Bigr)^r 
\geq \Bigl(\frac{\sigma_{k+n_0}}{\sigma_k}\Bigr)^r (n_0+1)\eqspace.
\]
Hence Statement~\ref{it:seq:exp:shifted} follows along the same line as \ref{it:seq:exp:partial}$\Rightarrow$\ref{it:seq:exp:shifted}.

\ref{it:seq:exp:shifted}$\Rightarrow$\ref{it:seq:exp:alm_decr} For $k\leq n$ there is a unique $m\geq 0$ with $k + mn_0 \leq n < k+(m+1)n_0$. Using the monotonicity of $\sigma$ and Assumption~\ref{it:seq:exp:shifted} $m$-times we get
\[ 
\sigma_n \leq \sigma_{k+mn_0} \leq \sigma_k a^m \leq \frac{\sigma_k}{a} a^{\frac{n-k}{n_0}}= \frac{\sigma_k}{a} b^{k-n}
\]
with $b=a^{-\sfrac{1}{n_0}}>1$. Hence the supremum is bounded by $a^{-1}$.
\end{proof}

\begin{lem}[Doubling Condition]\label{lem:seq:doubling}
Let $\sigma=(\sigma_k)_{k\geq 1}$ with $\sigma_k>0$ and $\sigma_k\searrow 0$. Then the following statements are equivalent:
\begin{enumerate}
\item\label{it:seq:doubling:doubling} $\sigma_n\asymp\sigma_{2n}$.
\item\label{it:seq:doubling:o_varying} For all $\lambda>0$ the function $f(x) \coloneqq \sigma_{\lfloor x\rfloor + 1}$ satisfies $f(x)\asymp f(\lambda x)$ for $x>0$.
\item\label{it:seq:doubling:alm_incr} $\inf_{k\leq n}\frac{\sigma_n n^\alpha}{\sigma_k k^\alpha} > 0$ for some $\alpha>0$.
\item\label{it:seq:doubling:geo_mean} $\sigma_n\asymp (\sigma_1\cdot\ldots\cdot\sigma_n)^{\sfrac{1}{n}}$.
\end{enumerate}
\end{lem}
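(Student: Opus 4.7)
The plan is to establish the cycle (iii)$\Rightarrow$(i)$\Rightarrow$(ii)$\Rightarrow$(iii) together with (i)$\Leftrightarrow$(iv), so that all four conditions become equivalent. A single dyadic iteration of the doubling relation will power most of the arguments.

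For (iii)$\Rightarrow$(i) I simply instantiate the infimum at $n=2k$, obtaining $\sigma_{2k}\geq c\cdot 2^{-\alpha}\sigma_k$ directly. For (i)$\Rightarrow$(ii), I normalize so that $\sigma_{2n}\geq c\sigma_n$ with $c\in(0,1)$; iterating gives $\sigma_{2^m n}\geq c^m \sigma_n$ for all $m,n$. Given $\lambda>0$, I choose $m\in\N$ with $2^{-m}\leq\lambda\leq 2^m$; setting $n=\lfloor x\rfloor+1$, the index $\lfloor\lambda x\rfloor+1$ lies between roughly $2^{-m}n$ and $2^m n+1$, and monotonicity combined with the iterated doubling bounds $f(\lambda x)/f(x)$ from above and below by constants depending only on $\lambda$. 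For (ii)$\Rightarrow$(iii), the cleanest path is to evaluate at $x=n-\tfrac{1}{2}$ with $\lambda=2$, which gives $\lfloor x\rfloor+1=n$ and $\lfloor 2x\rfloor+1=2n$; condition (ii) then yields $\sigma_n\asymp\sigma_{2n}$, and iterating that doubling relation dyadically shows $\sigma_n/\sigma_k\geq c\cdot(k/n)^\alpha$ for $k\leq n$ with $\alpha:=\log_2(1/c)>0$, which is (iii).

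For (iii)$\Rightarrow$(iv), monotonicity yields $(\sigma_1\cdots\sigma_n)^{1/n}\geq\sigma_n$ for free, while (iii) rewrites as $\sigma_i\leq c^{-1}(n/i)^\alpha\sigma_n$ for $i\leq n$, so
\[
(\sigma_1\cdots\sigma_n)^{1/n}\leq c^{-1}\sigma_n\Bigl(\frac{n^n}{n!}\Bigr)^{\alpha/n},
\]
and Stirling bounds the bracketed factor by $e^\alpha$ up to vanishing corrections. For (iv)$\Rightarrow$(i), setting $G_n:=(\sigma_1\cdots\sigma_n)^{1/n}$, the identity $G_{2n}^{2n}=G_n^n\cdot\sigma_{n+1}\cdots\sigma_{2n}$ combined with the monotone bound $\sigma_{n+1}\cdots\sigma_{2n}\geq\sigma_{2n}^n$ gives $G_{2n}^2\geq G_n\sigma_{2n}$; substituting $G_n\asymp\sigma_n$ and $G_{2n}\asymp\sigma_{2n}$ from (iv) yields $\sigma_{2n}\succcurlyeq\sigma_n$, which is (i).

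The main obstacle I anticipate is the bookkeeping in (i)$\Rightarrow$(ii): one must carefully control how $\lfloor\lambda x\rfloor+1$ relates to $2^{\pm m}(\lfloor x\rfloor+1)$, for small $x$ and non-integer $\lambda x$, in order to avoid off-by-one errors and to absorb additive constants into the multiplicative bound. Once the dyadic iteration $\sigma_{2^m n}\geq c^m\sigma_n$ is in hand, every other implication reduces to a one-line calculation.
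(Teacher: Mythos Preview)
Your proposal is correct and follows essentially the same route as the paper: the same cycle through (i), (ii), (iii) plus the link to (iv) via (iii)$\Rightarrow$(iv) and (iv)$\Rightarrow$(i), with identical arguments (Stirling for (iii)$\Rightarrow$(iv), the inequality $G_{2n}^2\geq G_n\sigma_{2n}$ for (iv)$\Rightarrow$(i)). Two minor points worth noting: the paper outsources (i)$\Leftrightarrow$(iii) to a reference (K\"uhn), whereas you spell out the dyadic iteration explicitly; and for (i)$\Rightarrow$(ii) the paper handles the bookkeeping you flag as the ``main obstacle'' via the clean elementary inequality $\lfloor nx\rfloor+1\leq n(\lfloor x\rfloor+1)$ for $n\in\N$, which you may find tidier than tracking $2^{\pm m}(\lfloor x\rfloor+1)$ directly.
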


Note that the symbol $\asymp$ in Statement~\ref{it:seq:doubling:o_varying} means that for all $\lambda>0$ there are constants $c_1,c_2>0$, depending on $\lambda>0$, with $c_1f(x)\leq f(\lambda x)\leq c_2f(x)$ for all $x>0$. Moreover, Statement~\ref{it:seq:doubling:alm_incr} implies $\sigma_n\succcurlyeq n^{-\alpha}$ and hence $\sigma$ decreases at most polynomially. 

\begin{proof}
\ref{it:seq:doubling:doubling}$\Leftrightarrow$\ref{it:seq:doubling:alm_incr} This has already been pointed out by \citet[p.~482]{K2005} and is a direct consequence of the monotonicity of $\sigma$. 

\ref{it:seq:doubling:doubling}$\Leftrightarrow$\ref{it:seq:doubling:o_varying} Statement \ref{it:seq:doubling:o_varying}, for $\lambda=2$ and $x=n-\sfrac{1}{2}$, directly implies \ref{it:seq:doubling:doubling}.
For the inverse implication we first show that 
\begin{equation}\label{eq:seq:doubling:o_varying:temp}
\lfloor nx\rfloor + 1 \leq n(\lfloor x\rfloor + 1)
\end{equation}
holds for all $n\geq 1$ and all $x>0$. 
To this end, let $0\leq r<1$ with $x = \lfloor x\rfloor + r$. Since the strict inequality $nx = n\lfloor x\rfloor + nr < n\lfloor x\rfloor + n$ holds and the right hand side is an integer we find $\lfloor nx\rfloor \leq n\lfloor x\rfloor + n-1$ which is equivalent to \eqref{eq:seq:doubling:o_varying:temp}.
Now, to the implication \ref{it:seq:doubling:doubling}$\Rightarrow$\ref{it:seq:doubling:o_varying}. Let $c>0$ be the doubling constant of $\sigma$, i.e.\ $\sigma_{2n}\geq c\sigma_n$ for all $n\geq 1$. Using the monotonicity of $\sigma$, the inequality in \eqref{eq:seq:doubling:o_varying:temp}, and \ref{it:seq:doubling:doubling} we find
\begin{equation}\label{eq:seq:doubling:o_varying:doubling}
f(2x) 
= \sigma_{\lfloor 2x\rfloor + 1} 
\geq \sigma_{2(\lfloor x\rfloor + 1)}
\geq c\sigma_{\lfloor x\rfloor + 1} 
= c f(x)
\eqspace.
\end{equation}
Finally, for fixed $\lambda\geq 1$ we choose an $m\geq 1$ with $2^m\geq\lambda$. The monotonicity of $f$ and an $m$-times application of \eqref{eq:seq:doubling:o_varying:doubling} yields \ref{it:seq:doubling:o_varying}. The case $0<\lambda<1$ can be easily deduced from the case $\lambda>1$. 

\ref{it:seq:doubling:alm_incr}$\Rightarrow$\ref{it:seq:doubling:geo_mean} Because of the monotonicity of $\sigma$ we always have $(\sigma_1\cdot\ldots\cdot\sigma_n)^{\sfrac{1}{n}} \geq \sigma_n$. For $c \coloneqq \inf_{k\leq n}\frac{\sigma_n n^\alpha}{\sigma_k k^\alpha} > 0$ we have $\sigma_k\leq c^{-1} \sigma_n n^\alpha k^{-\alpha}$ for all $k\leq n$. Since Stirling's formula yields $(n!)^{\sfrac{1}{n}}\asymp n$ we get
\[ 
(\sigma_1\cdot\ldots\cdot\sigma_n)^{\sfrac{1}{n}} 
\leq c^{-1}\sigma_n\frac{n^\alpha}{(n!)^{\sfrac{\alpha}{n}}}
\asymp \sigma_n\eqspace.
\]

\ref{it:seq:doubling:geo_mean}$\Rightarrow$\ref{it:seq:doubling:doubling} Let $c>0$ with $\sigma_n \leq (\sigma_1\cdot\ldots\cdot\sigma_n)^{\sfrac{1}{n}} \leq c \sigma_n$ for all $n\geq 1$. Then
\[ 
c\sigma_{2n}
\geq  (\sigma_1\cdot\ldots\cdot\sigma_{2n})^{\frac{1}{2n}}
= (\sigma_1\cdot\ldots\cdot\sigma_n)^{\frac{1}{2n}} (\sigma_{n+1}\cdot\ldots\cdot\sigma_{2n})^{\frac{1}{2n}}
\geq \sqrt{\sigma_n \sigma_{2n}}\eqspace.
\]
is satisfied for all $n\geq 1$. Hence we have $c^2\sigma_{2n}\geq \sigma_n \geq \sigma_{2n}$ for all $n\geq 1$.
\end{proof}

\begin{lem}[Tail Sequence]\label{lem:seq:tail}
Let $r>0$, $\sigma=(\sigma_k)_{k\geq 1}$ with $\sigma_k>0$ and $\sigma_k\searrow 0$ and $\tau$ be the tail sequence given by \eqref{eq:tail_def}. Then the following statements hold:
\begin{enumerate}
\item\label{it:seq:tail:alp} The following statements are equivalent:
\begin{enumerate}
\item\label{it:seq:tail:alp:alm_decr} $\sup_{k\leq n} \frac{\sigma_n n^\alpha}{\sigma_k k^\alpha}<\infty$ for some $\alpha >\sfrac{1}{r}$.
\item\label{it:seq:tail:alp:inf_frac} Condition~(\ALP): $\tau_n\preccurlyeq \sigma_n n^{\sfrac{1}{r}}$.
\setcounter{temp}{\value{enumii}}
\end{enumerate}
\item\label{it:seq:tail:amp} The following statements are equivalent:
\begin{enumerate}
\setcounter{enumii}{\value{temp}}
\item\label{it:seq:tail:amp:doubling} $\tau_n\asymp \tau_{2n}$.
\item\label{it:seq:tail:amp:sup_frac} Condition~(\AMP): $\tau_n\succcurlyeq \sigma_n n^{\sfrac{1}{r}}$.
\end{enumerate}
\item\label{it:seq:tail:doubling} Condition $\sigma_n\asymp\sigma_{2n}$ implies $\tau_n\asymp \tau_{2n}$. If, in addition, \ref{it:seq:tail:alp:alm_decr} is satisfied, then $\sigma_n\asymp\sigma_{2n}$ is satisfied if and only if $\tau_n\asymp \tau_{2n}$ is satisfied.
\end{enumerate}
\end{lem}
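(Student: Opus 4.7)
The plan is to handle the three parts in order, with the delicate step lying in Part~\ref{it:seq:tail:alp}.

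For Part~\ref{it:seq:tail:alp}, the direction \ref{it:seq:tail:alp:alm_decr}$\Rightarrow$\ref{it:seq:tail:alp:inf_frac} is immediate: from $\sigma_m\leq C\sigma_n(n/m)^\alpha$ for $m\geq n$ with $r\alpha>1$, the tail estimate $\tau_n^r=\sum_{m\geq n}\sigma_m^r\leq C^r\sigma_n^r\, n^{r\alpha}\sum_{m\geq n}m^{-r\alpha}\preccurlyeq \sigma_n^r\, n$ is a one-line comparison with a convergent $p$-series. The converse \ref{it:seq:tail:alp:inf_frac}$\Rightarrow$\ref{it:seq:tail:alp:alm_decr} is the main work and proceeds in two moves. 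First, I would derive a \emph{strict} geometric decay of the tail: writing $c$ for the (\ALP)-constant, the pointwise bound $\sigma_k^r\geq \tau_k^r/(c^r k)\geq \tau_{2n}^r/(2c^r n)$ for $n\leq k\leq 2n-1$ summed over $n$ indices yields $\tau_n^r-\tau_{2n}^r\geq \tau_{2n}^r/(2c^r)$, so $\tau_{2n}\leq \gamma\tau_n$ with $\gamma\coloneqq(1+1/(2c^r))^{-1/r}<1$. Second, monotonicity delivers the volume-type inequality $2^{j-1}n\,\sigma_{2^j n}^r\leq \tau_{2^{j-1}n}^r$; iterating the geometric decay on the right and applying (\ALP) once more chains this into
\[
\sigma_{2^j n}^r\leq \frac{2c^r}{\gamma^r}\Bigl(\frac{\gamma^r}{2}\Bigr)^{\!j}\sigma_n^r\eqspace.
\]
Setting $\delta\coloneqq\gamma\cdot 2^{-1/r}$, this reads $\sigma_{2^j n}/\sigma_n\preccurlyeq \delta^j$ with $\alpha\coloneqq -\log_2\delta = 1/r-\log_2\gamma >1/r$, and monotonic interpolation between consecutive powers of two produces $\sigma_m m^\alpha\preccurlyeq \sigma_n n^\alpha$ for $m\geq n$, i.e.\ \ref{it:seq:tail:alp:alm_decr}.

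For Part~\ref{it:seq:tail:amp} the shared identity is $\tau_n^r-\tau_{Mn}^r=\sum_{k=n}^{Mn-1}\sigma_k^r$. For (\AMP)$\Rightarrow$\ref{it:seq:tail:amp:doubling}, the form $\sigma_k^r\leq c^r\tau_k^r/k$ together with $\tau_k\leq \tau_n$ for $k\geq n$ bounds this sum by $c^r\tau_n^r\sum_{k=n}^{Mn-1}1/k\leq c^r\tau_n^r(\log M+O(1/n))$; choosing $M$ small enough (depending only on $c,r$) so that the right side is at most $\tau_n^r/2$ yields $\tau_{Mn}\succcurlyeq \tau_n$, and finitely many such scalings push the threshold past $2n$. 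The reverse is easier: monotonicity of $\sigma$ gives $\tau_n^r-\tau_{2n}^r\geq n\sigma_{2n}^r$, so $\tau$-doubling $\tau_n\leq C\tau_{2n}$ directly forces $\sigma_{2n}(2n)^{1/r}\preccurlyeq \tau_{2n}$, which is (\AMP) on even indices; the odd ones are recovered by monotonicity of $\sigma$ together with $\tau_{2n}\leq C\tau_{4n}\leq C\tau_{2n+1}$.

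For Part~\ref{it:seq:tail:doubling}, the implication $\sigma_n\asymp\sigma_{2n}\Rightarrow\tau_n\asymp\tau_{2n}$ follows from a dyadic lower bound: iterating $\sigma$-doubling gives $\sigma_{2^jn}\succcurlyeq c^j\sigma_n$, and
\[
\tau_{2n}^r\geq\sum_{j\geq 0}2^{j+1}n\,\sigma_{2^{j+2}n}^r\succcurlyeq n\sigma_n^r\sum_{j\geq 0}(2c^r)^j\eqspace,
\]
where the hypothesis $\sigma\in\ell_r$ forces $c<2^{-1/r}$, so the geometric series converges. Combined with the trivial upper bound $\tau_n^r-\tau_{2n}^r\leq n\sigma_n^r$ this gives $\tau_n^r\preccurlyeq\tau_{2n}^r$, i.e.\ $\tau_n\asymp\tau_{2n}$. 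For the reverse direction under the additional hypothesis (\ALP), Part~\ref{it:seq:tail:amp} rewrites $\tau$-doubling as (\AMP), and the combination (\AMP)\,+\,(\ALP) yields $\tau_n\asymp\sigma_n n^{1/r}$; since $(2n)^{1/r}\asymp n^{1/r}$, the doubling of $\tau$ then transfers directly to $\sigma_n\asymp\tau_n/n^{1/r}\asymp\tau_{2n}/(2n)^{1/r}\asymp\sigma_{2n}$.

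The step I expect to be the main obstacle is the second move in \ref{it:seq:tail:alp:inf_frac}$\Rightarrow$\ref{it:seq:tail:alp:alm_decr}: a direct volume-type argument only shows that $\sigma_m m^{1/r}$ is almost decreasing, i.e.\ gives the critical exponent $\alpha=1/r$, which is \emph{not} enough. The strict improvement to $\alpha>1/r$ is bought exactly by feeding the geometric tail decay $\gamma<1$ back into the dyadic estimate; this is what produces the extra $2^{-1/r}$ factor in $\delta$ and hence the gain $\alpha-1/r=-\log_2\gamma>0$.
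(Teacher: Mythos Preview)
Your argument is correct throughout, and it is genuinely different from the paper's proof in two of the four nontrivial implications.

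For \ref{it:seq:tail:alp:inf_frac}$\Rightarrow$\ref{it:seq:tail:alp:alm_decr} the paper does not argue directly: it passes to the function $f(x)=x\sigma_{\lfloor x\rfloor}^r$, observes that (\ALP) forces $\liminf f/\tilde f>0$ for the integrated tail $\tilde f$, and then invokes the Matuszewska-index machinery of Bingham--Goldie--Teugels to conclude $\alpha(f)<0$, i.e.\ almost-decrease with a strictly negative exponent. Your two-step dyadic argument (first extract the strict contraction $\tau_{2n}\leq\gamma\tau_n$ with $\gamma<1$ from (\ALP), then feed that contraction back to beat the critical exponent $\sfrac{1}{r}$) is fully self-contained and identifies exactly where the gain $\alpha-\sfrac{1}{r}=-\log_2\gamma>0$ comes from; the paper's route is shorter on the page but opaque without the cited reference.

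For \ref{it:seq:tail:amp:sup_frac}$\Rightarrow$\ref{it:seq:tail:amp:doubling} the paper again outsources the work, this time to a telescoping-product representation $\tau_n^r=\exp(\gamma_n-\sum_{k<n}\rho_k/k)$ with $\rho_k=k\sigma_k^r/\tau_k^r$ bounded, followed by a citation that such a representation implies the doubling condition. Your harmonic-sum estimate is again more elementary. Two small points you should make explicit when writing it up: the scaling factor $M$ must be taken in $(1,e^{1/(2c^r)})$ and is generally non-integral, so ``$Mn$'' should be read as $\lceil Mn\rceil$; and the $O(1/n)$ term forces you to run the iteration only for $n\geq N_0$, handling the finitely many $n<N_0$ by the trivial bound $\tau_{2n}/\tau_n\geq\tau_{2N_0}/\tau_1>0$. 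Neither is a gap, but the text as written elides them. For the reverse \ref{it:seq:tail:amp:doubling}$\Rightarrow$\ref{it:seq:tail:amp:sup_frac} the paper simply cites K\"uhn; your short direct proof is equivalent.

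For Part~\ref{it:seq:tail:doubling} your dyadic-block lower bound on $\tau_{2n}^r$ plus the $\ell_r$-forced constraint $2c^r<1$ is correct but unnecessarily heavy: the paper gets $\sigma_n\asymp\sigma_{2n}\Rightarrow\tau_n\asymp\tau_{2n}$ in one line via the index substitution
\[
\tau_{2n}^r \;\geq\; \sum_{k\geq n}\sigma_{2k}^r \;\geq\; c^r\sum_{k\geq n}\sigma_k^r \;=\; c^r\tau_n^r\eqspace,
\]
which you may prefer. The converse under \ref{it:seq:tail:alp:alm_decr} is handled identically in both proofs.
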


\begin{proof}
\ref{it:seq:tail:alp:alm_decr}$\Rightarrow$\ref{it:seq:tail:alp:inf_frac} For $c \coloneqq \sup_{k\leq n} \frac{\sigma_n n^\alpha}{\sigma_k k^\alpha}<\infty$ we get
\[ 
\frac{\tau_k^r}{k\sigma_k^r} 
= \frac{1}{k}\sum_{n=k}^\infty \Bigl(\frac{\sigma_n}{\sigma_k}\Bigr)^r
\leq c^r k^{\alpha r - 1}\sum_{n=k}^\infty n^{-\alpha r} 
\]
for all $k\geq 1$. Estimating the remaining sum using integrals we get the assertion
\[
k^{\alpha r - 1}\sum_{n=k}^\infty n^{-\alpha r} 
\leq k^{\alpha r - 1}\biggl(k^{-\alpha r} + \int_k^\infty t^{-\alpha r}\ \d t\biggr)
\leq \frac{\alpha r}{\alpha r -1}\eqspace.
\]
\ref{it:seq:tail:alp:inf_frac}$\Rightarrow$\ref{it:seq:tail:alp:alm_decr} is a consequence of \citet[Theorem~2.6.3]{BiGoTe1989} to the positive and measurable function $f(x) \coloneqq x\sigma_{\lfloor x\rfloor}^r$ for $x\geq 1$. To this end, we recall the definition of \emph{almost decreasing} functions from \cite[Section~2.2.1]{BiGoTe1989} and the \emph{Matuszewska index} $\alpha(f)$ of $f$, defined in \cite[Section~2.1.2]{BiGoTe1989}. Moreover, we have 
\[ 
\alpha(f)=\inf\bigl\{\alpha\in\R:\ x^{-\alpha} f(x) \text{ is almost decreasing}\bigr\}
\]
according to \cite[Theorem~2.2.2]{BiGoTe1989}. Since $x^{-1}f(x)$ is decreasing we have $\alpha(f)\leq 1 <\infty$ and hence $f$ is of \emph{bounded increase}, i.e.\ $f\in\text{BI}$, see \cite[p.~71]{BiGoTe1989} for a definition. Consequently, \cite[Theorem~2.6.3 (d)]{BiGoTe1989} is applicable to the function $f$. For $\tilde{f}(x) \coloneqq \int_x^\infty \sfrac{f(t)}{t}\,\d t$ we have 
\[ 
\frac{f(x)}{\tilde{f}(x)} 
= \frac{x\sigma_{\lfloor x\rfloor}^r}{\tau_{\lfloor x\rfloor}^r - (x - \lfloor x\rfloor)\sigma_{\lfloor x\rfloor}^r} 
\geq \frac{x \sigma_{\lfloor x\rfloor}^r}{\tau_{\lfloor x\rfloor}^r}
\geq \frac{\lfloor x\rfloor \sigma_{\lfloor x\rfloor}^r}{\tau_{\lfloor x\rfloor}^r}
\geq c^{-r}
\]
for all $x\geq 1$, where $c>0$ is a constant satisfying $\tau_n \leq c \sigma_n n^{\sfrac{1}{r}}$ for all $n\geq 1$. Therefore, $\liminf_{x\to\infty}\sfrac{f(x)}{\tilde{f}(x)}>0$ and \cite[Theorem~2.6.3 (d)]{BiGoTe1989} yields $\alpha(f)<0$. Consequently, there is a $\alpha_0 < 0$ such that $x^{-\alpha_0}f(x)$ is almost decreasing. The definition of almost decreasing gives us the assertion with $\alpha = \frac{1-\alpha_0}{r} > \sfrac{1}{r}$.

\ref{it:seq:tail:amp:doubling}$\Rightarrow$\ref{it:seq:tail:amp:sup_frac} This is from \cite[first equation on p.~45]{K2008}. \ref{it:seq:tail:amp:sup_frac}$\Rightarrow$\ref{it:seq:tail:amp:doubling} The following idea is from \cite[proof of Theorem~4]{BoSe1973}. According to our assumption the sequence
\[ 
\rho_n \coloneqq n\Bigl(1 - \frac{\tau_{n+1}^r}{\tau_n^r}\Bigr) 
= n \frac{\tau_n^r - \tau_{n+1}^r}{\tau_n^r} 
= \frac{n \sigma_n^r}{\tau_n^r} 
\]
is positive and bounded. Building a telescope product we get
\[ 
\frac{\tau_{n}^r}{\tau_1^r} =\prod_{k=1}^{n-1} \frac{\tau_{k+1}^r}{\tau_k^r} = \prod_{k=1}^{n-1} \Bigl(1 - \frac{\rho_k}{k}\Bigr)\eqspace.
\]
Since $0<1-\frac{\rho_k}{k}<1$ this gives $\tau_n^r = \exp\circ\log(\tau_n^r) = \exp(\gamma_n - \sum_{k=1}^{n-1}\sfrac{\rho_k}{k})$ with
\[
\gamma_n \coloneqq \log \tau_1^r + \sum_{k=1}^{n-1} \biggl(\log\Bigl(1-\frac{\rho_k}{k}\Bigr) + \frac{\rho_k}{k}\biggr)\eqspace.
\]
Below we will prove that $(\gamma_n)_{n\geq 1}$ converges and hence the assertion is a consequence of this representation of $\tau_n^r$ according to \cite[Theorem~2]{DjTo2004}. Now, to the convergence of $(\gamma_n)_{n\geq 1}$. Since $(\rho_k)_{k\geq 1}$ is bounded the sequence $a_k \coloneqq \sfrac{\rho_k}{k}$ is square summable. Without loss of generality we assume that there is a $0<q<1$ with $a_n<q$ for all $n\geq 1$. Using the Taylor series of the logarithm we get
\[ 
\log(1-a_k) + a_k = -\sum_{\ell=1}^\infty \frac{a_k^\ell}{\ell} + a_k = -\sum_{\ell=2}^\infty \frac{a_k^\ell}{\ell}\eqspace.
\]
Additionally, for $\ell\geq 2$ we have the estimate $\sum_{k=1}^\infty a_k^\ell \leq \|a\|_{\ell_2}^2 q^{\ell-2}$.
Together we get the absolute convergence of the series
\[ 
\sum_{k=1}^\infty|\log(1-a_k) + a_k| = \sum_{k=1}^\infty \sum_{\ell=2}^\infty\frac{a_k^\ell}{\ell} = \sum_{\ell=2}^\infty\frac{1}{\ell}\sum_{k=1}^\infty a_k^\ell \leq \frac{\|a\|_{\ell_2}^2}{q^2} \sum_{\ell=2}^\infty \frac{q^\ell}{\ell}<\infty\eqspace.
\]

\ref{it:seq:tail:doubling} According to our assumption there is a constant $c>0$ with $\sigma_{2n}\geq c\sigma_n$ for all $n\geq 1$. Then the assertion follows by
\[ 
\tau_{2n}^r \geq \sum_{k=n}^\infty \sigma_{2k}^r \geq c^r \sum_{k=n}^\infty \sigma_k^r = c^r \tau_n^r\eqspace.
\]
For the inverse we additionally assume \ref{it:seq:tail:alp:alm_decr} and hence we have also \ref{it:seq:tail:alp:inf_frac} and \ref{it:seq:tail:amp:sup_frac}, i.e.\ $\tau_n\asymp \sigma_n n^{\sfrac{1}{r}}$. Consequently, $\sigma_{2n}\asymp \tau_{2n} (2n)^{-\sfrac{1}{r}} \asymp \tau_n n^{-\sfrac{1}{r}}\asymp \sigma_n$ is satisfied.
\end{proof}

\begin{singlespace}

\bibliographystyle{../_style/bibliographystyle}
\bibliography{../_style/literatur}

\end{singlespace}


\end{document}